\newcounter{kommentar}
\renewcommand\ge{\geqslant}
\renewcommand\le{\leqslant}
\newcommand\forget[1]{}
\DeclareMathOperator{\Aut}{Aut}
\newcommand{\chr}{\mathop{\rm char}\nolimits}
\newenvironment{smatrix}{\left(\begin{smallmatrix}}{\end{smallmatrix}\right)}
\newcommand\dm[1]{\mathop{\mathcal{D}^\mathrm{b}}(#1)}
\DeclareMathOperator{\Ext}{Ext}
\renewcommand{\mod}{\mathop{\rm mod}\nolimits}
\DeclareMathOperator{\stmod}{\underline{mod}}
\DeclareMathOperator{\add}{add}
\DeclareMathOperator{\Hom}{Hom}
\DeclareMathOperator{\sthom}{\underline{Hom}}
\DeclareMathOperator{\ind}{ind}
\DeclareMathOperator{\lol}{\ell\ell}
\newcommand{\Rhom}{\mathop{\mathbf{R}\mathrm{Hom}}\nolimits}
\newtheorem{thm}{Theorem}[section]
\newtheorem{lma}[thm]{Lemma}
\newtheorem{prop}[thm]{Proposition}
\newtheorem{cor}[thm]{Corollary}
\theoremstyle{definition}
\theoremstyle{remark}
\newtheorem{rmk}[thm]{Remark}
\theoremstyle{definition}
\newcommand{\U}{\mathcal{U}}
\newcommand{\C}{\mathcal{C}}
\newcommand{\Z}{\mathbb{Z}}
\author{Erik Darp\"{o}}
\address[Darp\"o]{Link\"oping University, Department of Mathematics, SE-58183 Link\"oping, Sweden}
\author{Tor Kringeland}
\address[Kringeland]{Department of Mathematical Sciences,
  NTNU, 7491 Trondheim, Norway}
\title[$d$-Representation-finite symmetric algebras of finite representation type]{Classification of the $d$-representation-finite symmetric $k$-algebras of finite representation type}
\begin{document}

\keywords{Higher-dimensional Auslander--Reiten theory, cluster tilting, $d$-representation-finite, trivial extension, Dynkin quiver}

\begin{abstract}
  We give a complete classification of all $d$-representation-finite symmetric Nakayama algebras and of all $d$-representation-finite trivial extensions of path algebras of quivers, over an arbitrary field.
  As a consequence we get a classification, up to Morita equivalence, of all $d$-representation-finite symmetric algebras of finite representation type over an algebraically closed field.
\end{abstract}

\maketitle

\section{Introduction}

  Let $d$ be a positive integer. An algebra $A$ is said to be \emph{$d$-representation-finite} if it has a $d$-cluster-tilting module $M$. In this case, the additive closure of $M$ furnishes a ``higher-dimensional'' analogue of the Auslander--Reiten theory of a representation-finite algebra, with $d$-almost split sequences and a $d$-Auslander--Reiten translation relating the end terms of these sequences.
  The existence of a $d$-cluster-tilting module also has implications for the homological properties of $A$ as a whole.

Understanding and classifying $d$-representation-finite algebras are important problems in higher-dimensional Auslander--Reiten theory, introduced by Iyama \cite{iyama07a,iyama07b,iyama08}.
While many examples exist of $d$-rep\-re\-sen\-ta\-tion-finite algebras that have finite global dimension (e.g., \cite{hi11a,hi11b,jk19,vaso19}), or are self-injective \cite{di20,hj13,io13}, \emph{symmetric} algebras with a $d$-cluster-tilting module seem to be relatively rare. In this note, we give a classification of all finite-dimensional $d$-representation-finite symmetric algebras of finite representation type over an algebraically closed field.

Let $k$ be an arbitrary field. We denote by $T(A)$ the trivial extension algebra of a finite-dimensional $k$-algebra $A$ and by $kQ$ the path algebra of a quiver $Q$.
We prove the following results.

\begin{thm} \label{nakayama}
  Let $B$ be a ring-in\-de\-com\-po\-sa\-ble symmetric Nakayama algebra with $n$ simple modules and Loewy length $\lol(B) = an+1$, $a\ge1$ and $d$ an integer greater than or equal to $2$. Then $B$ is $d$-representation-finite if and only if
  \[(a,n,d) \in \{(1,t,2t-1) \mid t\ge2 \} \cup \{(1,3,2), (1,6,2), (2,3,2)\} \,.\]
\end{thm}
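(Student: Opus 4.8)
\emph{Plan.} The idea is to first pin down the algebra explicitly, then dispose of the case $a=1$ by invoking Theorem~\ref{ade}, and finally settle $a\ge2$ by a direct syzygy analysis in the stable category.

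First I would recall that a ring-indecomposable symmetric Nakayama algebra with $n$ simple modules is isomorphic to $B=kC_n/J^{an+1}$, where $C_n$ is the cyclic quiver on the vertex set $\Z/n\Z$ with arrows $i\to i+1$, $J$ is its arrow ideal, and the congruence $\lol(B)=an+1\equiv1\pmod n$ is exactly what makes the self-injective algebra symmetric. Writing $M_{i,\ell}$ for the indecomposable of top $S_i$ and length $\ell$, the non-projective indecomposables are those with $1\le\ell\le an$, and computing projective covers gives $\Omega M_{i,\ell}=M_{i+\ell,\,an+1-\ell}$. Since $B$ is symmetric one has $\tau=\Omega^2$, so the formula yields $\Omega^2M_{i,\ell}=M_{i+1,\ell}$, whence $\Omega^{2n}=\mathrm{id}$ on $\stmod B$ and a Serre functor $\mathbb S=\Omega=[-1]$; thus $\stmod B$ is $(2n-1)$-Calabi--Yau. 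I will use that $B$ is $d$-representation-finite precisely when $\mod B$ has a $d$-cluster-tilting module $M$, equivalently when $\C=\add\underline M$ is a $d$-cluster-tilting subcategory of $\stmod B$, and that such a $\C$ is closed under $\tau_d:=\mathbb S[-d]=\Omega^{d+1}$ and satisfies $\underline\Ext^i(\C,\C)=0$ for $1\le i\le d-1$ together with the corresponding maximality. (Note that for $d=2n-1$ one has $\tau_d=\Omega^{2n}=\mathrm{id}$, so $\C$ is then merely maximal $(d-1)$-orthogonal.)

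For $a=1$ the algebra is $B=kC_n/J^{n+1}$, which is exactly the trivial extension $T(kA_n)$ of the path algebra of a linearly oriented quiver of type $A_n$, as one checks by comparing dimensions, $n(n+1)=2\binom{n+1}{2}$, and Loewy structure. Since a linearly oriented $A_n$ is a connected acyclic quiver, Theorem~\ref{ade} applies and shows that $B$ is $d$-representation-finite if and only if $d=2n-1$, or $d=2$ and $n\in\{3,6\}$. This produces precisely the family $(1,t,2t-1)$ together with the sporadic triples $(1,3,2)$ and $(1,6,2)$; the $D_4$-case of Theorem~\ref{ade} does not intervene here, its trivial extension failing to be Nakayama.

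It remains to prove that for $a\ge2$ the only $d$-representation-finite case is $(a,n,d)=(2,3,2)$. Here I would compute the stable spaces $\sthom(M_{a',b'},M_{c',e'})$ explicitly, either by reading them off the stable Auslander--Reiten quiver, which is the tube $\Z A_{an}/\langle\tau^n\rangle$, or through a Galois covering by the linear Nakayama algebra, and then substitute $\Omega^iM_{i,\ell}$ to obtain $\underline\Ext^i$. Imposing $\underline\Ext^i(\C,\C)=0$ for $1\le i\le d-1$ along a $\tau_d$-orbit, combined with the Calabi--Yau duality $\underline\Ext^i(X,Y)\cong D\,\underline\Ext^{2n-1-i}(Y,X)$, forces rigid congruence conditions on the lengths occurring in $\C$, and the maximality half then requires these lengths to tile the height-$an$ tube. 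The numerical constraints relating $a$, $n$ and $d$ turn out to be incompatible once the tube is tall, leaving only a short list of small parameters that I would finish by hand; for $(2,3,2)$ I would exhibit the $2$-cluster-tilting module directly in $\mod kC_3/J^7$. The hard part will be exactly this last step: controlling the interaction of the orthogonality and maximality conditions with the periodicity and Calabi--Yau structure tightly enough to eliminate the \emph{entire} family $a\ge2$, rather than just finitely many small cases, and thereby isolating $(2,3,2)$ as the unique survivor.
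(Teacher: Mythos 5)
There are two genuine gaps here. First, your treatment of the case $a=1$ is circular relative to this paper: you invoke Theorem~\ref{ade} for linearly oriented $A_n$, but in the paper the type-$A$ part of Theorem~\ref{ade} is itself \emph{deduced from} Theorem~\ref{nakayama} (the opening paragraph of Section~4 observes that $T(kQ)$ for $Q$ of type $A_n$ is a symmetric Nakayama algebra with Loewy length $n+1$ and then cites Theorem~\ref{nakayama}). The remaining ingredients of the paper's proof of Theorem~\ref{ade} (Lemma~\ref{periodicity}, the Calabi--Yau and mesh-category arguments) only rule quivers \emph{out}; nothing there independently produces the $d$-cluster-tilting modules for $A_n$ with $d=2n-1$ or for $A_3$, $A_6$ with $d=2$. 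So to make your reduction legitimate you would have to supply an independent proof of the type-$A$ case, which is exactly the content you are trying to establish.

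Second, and more seriously, the case $a\ge2$ is not proved: your text explicitly defers the elimination of the entire family $a\ge2$ (``the hard part will be exactly this last step''), offering only a plan involving $\underline{\Ext}$-computations in the tube and an unverified claim that the numerical constraints become incompatible. This is precisely where the paper does real work, and it does so by a quite different route: it quotes the classification of $d$-representation-finite self-injective Nakayama algebras from \cite[Theorem~5.1]{di20}, specialised as Proposition~\ref{siny} to the divisibility conditions $((an+1)(d-1)+2)\mid 2n$ or $((an+1)(d-1)+2)\mid \gcd(d+1,2an)\,n$, and then settles everything by elementary arithmetic (Lemmas~\ref{conditiona}, \ref{divides2n}, \ref{a=2} and the concluding computation with $2n=b(d+1)$, which forces $b=1$ or $(d,n)=(2,6)$). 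Your stable-category preliminaries ($\Omega M_{i,\ell}=M_{i+\ell,\,an+1-\ell}$, $\Omega^{2n}\simeq\mathrm{id}$, and the $\gcd$ argument showing $(d+1)\mid 2n$) are correct and in fact reproduce the proof of Lemma~\ref{divides2n}, but they fall far short of the maximality analysis needed to isolate $(2,3,2)$; without either completing that analysis or importing the result of \cite{di20}, the theorem is not proved.
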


\begin{thm} \label{ade}
  Let $Q$ be a finite connected acyclic quiver and $d$ an integer greater than or equal to $2$. Then $T(kQ)$ is $d$-representation-finite if and only if one of the following holds.
  \begin{enumerate}
  \item The quiver $Q$ is of Dynkin type $A_3$ or $A_6$ and $d=2$;
  \item $Q$ is of Dynkin type $A_n$ and $d=2n-1$;
  \item $Q$ is of Dynkin type $D_4$ and $d=4$.
  \end{enumerate}
\end{thm}

Over an algebraically closed field, this yields a classification of all $d$-representation-finite symmetric algebras of finite representation type.

\begin{cor} \label{rfsymm}
  Let $k$ be an algebraically closed field, $A$ a finite-dimensional ring-in\-de\-com\-po\-sa\-ble symmetric $k$-algebra of finite representation type and $d\ge2$. Then $A$ is $d$-representation-finite if and only if it is either
  \begin{enumerate}
  \item stably equivalent to a Nakayama algebra with $n$ simple modules and Loewy length $an+1$, where $(a,n,d)$ is as in Theorem~\ref{nakayama}; or
    \label{rfsymm1}
  \item isomorphic to the trivial extension $T(A)$ of a tilted algebra $A$ of Dynkin type $D_4$ and $d=4$. \label{rfsymm2}
  \end{enumerate}
\end{cor}

\begin{rmk}
  The case \eqref{rfsymm1} in Corollary~\ref{rfsymm} is equivalent to $A$ being Morita equivalent to the Brauer algebra of a Brauer tree with $n$ edges and exceptional vertex of multiplicity $a$
(\cite[Theorem~2]{gr79}, \cite[Theorem~4.2]{rickard89b}).
Also, by \cite[Corollary~2.2]{asashiba99}, the condition \emph{stably equivalent} in \eqref{rfsymm1} could be replaced by \emph{derived equivalent}.
\end{rmk}

Any ($1$-)representation-finite block of a finite group algebra is stably equivalent to a symmetric Nakayama algebra. Thus, in particular, Theorem~\ref{nakayama} entails the classification of all $d$-representation-finite block algebras of finite representation type.
On the other hand, Erdmann and Holm \cite{eh08} have shown that the complexity of a $d$-representation-finite self-injective algebra is at most one. A consequence of this (see \cite[5.3]{eh08}) is that if $A$ is a $d$-representation-finite block algebra of infinite representation type, then $d=3$, $\chr k = 2$ and the defect group of $A$ is generalised quaternion. A first example of a $3$-representation-finite block algebra of this type has recently been found by B\"ohmler and Marczinzik \cite{bm22}.

  We remark that every representation-finite self-injective algebra $A$ is \emph{periodic}, in the sense that it has a periodic minimal projective resolution as an $A$-$A$-bimodule \cite{dugas10}.
  For $d\ge2$, it is an open question whether $d$-representation-finiteness implies periodicity for self-injective algebras, related to the so-called \emph{periodicity conjecture}, introduced by Erdmann and Skowr\'onski in \cite{es15}. The relation between cluster tilting, periodicity and the fractional Calabi--Yau property is studied in \cite{cdim25,jm23,jm25}, and there is ongoing work towards a classification of all tame symmetric algebras of period four \cite{ehs24,es18,es19,es20,hss23}.

\section{Preliminaries}

Throughout this note, $k$ is a field, $A$ a finite-dimensional $k$-algebra and $d$ an integer greater than or equal to $2$.
We denote by $\mod A$ the category of finite-dimensional right $A$-modules. The \emph{stable module category}, $\stmod A$, is the quotient of $\mod A$ by the ideal of morphisms factoring through a projective module. By $\mod^\Z A$ we denote the category of $\Z$-graded finite-dimensional $A$-modules and by $\stmod^\Z A$ the corresponding stable category. 
If $A$ is self-injective then $\stmod A$ has the structure of a triangulated category, with suspension functor given by the co-syzygy functor: $\Omega^{-1}:\stmod A \to \stmod A$. 

A \emph{$d$-cluster-tilting subcategory} of an abelian or triangulated category $\C$ is a functorially finite full subcategory $\U\subset\C$ satisfying
\begin{align*}
  \U &= \left\{ X\in\C \mid \forall\, U\in\U,\:i\in\{1,\ldots,d-1\}: \Ext^i(X,U) = 0\right\} \\
  &= \left\{ X\in\C \mid \forall\, U\in\U,\:i\in\{1,\ldots,d-1\}: \Ext^i(U,X) = 0\right\} \,.
\end{align*}
A \emph{$d$-cluster-tilting module} is an $A$-module $M$ for which the additive closure, $\add M$, forms a $d$-cluster-tilting subcategory of $\mod A$.
An algebra $A$ is \emph{$d$-representation-finite} if it has a $d$-cluster-tilting module. 

Let $A$ be self-injective, and $U$ a finite-dimensional $A$-module. Then the subcategory $\add U\subset \stmod A$ is $d$-cluster-tilting if and only if $U\oplus A\oplus DA$ is a $d$-cluster-tilting $A$-module. In particular, $d$-representation-finiteness is preserved by stable equivalence: if $A'$ is self-injective and $\stmod A\simeq \stmod A'$ a triangle equivalence, then $A$ is $d$-representation-finite if and only if so is $A'$.

We denote by $D$ the standard $k$-duality: $D=\Hom_k(-,k):\mod A\to \mod A^{\rm op}$.
The \emph{trivial extension} of $A$ is the algebra
\[T(A) = A\oplus DA\qquad \mbox{with multiplication} \qquad (a,f)(b,g) = (ab, ag + fb),\]
where the products $ag$ and $fb$ are given by the bimodule structure of $DA$. 
Trivial extension algebras are \emph{symmetric}, meaning that $T(A)$ and $DT(A)$ are isomorphic as $T(A)$-bimodules.

Let $\C$ be a $k$-linear triangulated category with $k$-dual $D:\C\to\C^{\rm op}$. 
A \emph{Serre functor} on $\C$ is an auto-equivalence $\mathbb{S}:\C\to\C$ satisfying a bifunctorial isomorphism
$\C(X,\mathbb{S}(Y))\simeq D\C(Y,X)$.
For any symmetric algebra $A$, the syzygy functor $\Omega$ is a Serre functor on $\stmod A$.
If $A$ is of finite global dimension, then the \emph{Nakayama functor}
\[\nu = D\circ\Rhom_{A}(-,A)\simeq \,- \otimes^{\mathbf{L}}_{A}DA:\dm{A}\to\dm{A} \] 
is a Serre functor on the bounded derived category $\dm{A}$ of finite-dimensional $A$-modules.
In this case, Happel's theorem \cite{happel88} gives a triangle equivalence
\[\dm{A} \simeq \stmod^\Z T(A),\]
under which the grade shift functor $(1)$ on $\stmod^\Z T(A)$ corresponds to the auto-equivalence $\nu\circ[1]$ of $\dm{A}$.
Note that $\nu$ is a triangle functor on $\dm{A}$, and thus it commutes with $[1]$ up to natural isomorphism. 

For a quiver $Q$, arrows in the path algebra $kQ$ are composed from left to right. Consequently, modules of $kQ$ correspond to covariant representations of the quiver.

Let $Q$ be a quiver of simply laced Dynkin type. Then the path algebra $kQ$ \cite{gabriel72}, as well as its trivial extension $T(kQ)$ \cite{tachikawa80}, is representation finite.
The forgetful functor $\mod^\Z T(kQ)\to \mod T(kQ)$ is dense (that is, every $T(kQ)$-module is gradable) \cite{gg82b} and there are triangle equivalences
\[\stmod T(kQ) \simeq (\stmod^\Z T(kQ))/(1) \simeq \dm{kQ}/(\nu\circ[1]) \,.\]
It follows that basic $d$-cluster-tilting modules of $T(kQ)$ correspond bijectively to $(\nu\circ[1])$-equivariant $d$-cluster-tilting subcategories of $\dm{kQ}$.

Let $\Z Q$ be the quiver with vertex set $(\Z Q)_0 = Q_0\times \Z$ and, for each arrow $a:i\to j$ in $Q$ and $l\in\Z$, two arrows $a = a_l:(i,l)\to (j,l)$ and $a^*=a^*_l: (j,l)\to (i,l+1)$ in $\Z Q$. The $k$-linear \emph{path category} of $\Z Q$ has object set $(\Z Q)_0$ and morphism spaces $(i,l)\to(j,m)$ spanned by all paths from $(i,l)$ to $(j,m)$. The \emph{mesh category} $k(\Z Q)$ is the quotient of the path category by the ideal generated by all elements 
  \[\sum_{a\in Q_1 ,\, s(a) = i}a_la_l^* + \sum_{b\in Q_1 ,\, t(b) = i}b_l^*b_{l+1}\in\Hom((i,l),(i,l+1))\]
where $(i,l)\in (\Z Q)_0$.
The full subcategory of $\dm{kQ}$ formed by all indecomposable objects is equivalent to
$k(\Z Q)$.
The Auslander--Reiten translation on $\dm{kQ}$ is $\tau = \nu\circ[-1]$, and corresponds to the shift given by $l\mapsto l-1$ in $k(\Z Q)$. In particular, every indecomposable object in $\dm{kQ}$ is isomorphic to $\tau^l(P)$ for a unique indecomposable projective $kQ$-module $P$ and integer $l\in\Z$. \cite[Section~I.5]{happel88}

\begin{prop}[{\cite[Proposition~3.4]{iy08}}] \label{nud}
  Let $\C$ be a $k$-linear triangulated category with a Serre functor $\mathbb{S}$.
  Then every $d$-cluster-tilting subcategory $\U$ of $\C$ satisfies $(\mathbb{S}\circ[-d])(\U)=\U$.
\end{prop}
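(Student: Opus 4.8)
The plan is to use Serre duality to play the two Ext-vanishing conditions in the definition of $\U$ off against each other. Throughout I write $\Ext^i(X,Y)=\C(X,Y[i])$, and I recall that a Serre functor on a triangulated category is automatically a triangle functor, so $\mathbb{S}$ commutes with the shift up to natural isomorphism: $\mathbb{S}(Z[j])\simeq(\mathbb{S}Z)[j]$. Set $\Phi:=\mathbb{S}\circ[-d]$. This is an auto-equivalence of $\C$, being a composite of auto-equivalences; in particular it is essentially surjective and preserves isomorphism-closure and additivity. Thus to prove $\Phi(\U)=\U$ it suffices to establish the two inclusions $\Phi(\U)\subseteq\U$ and $\Phi^{-1}(\U)\subseteq\U$, since applying $\Phi$ to the second gives $\U\subseteq\Phi(\U)$. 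And because membership in $\U$ is, by the two displayed equalities defining a $d$-cluster-tilting subcategory, literally equivalent to the corresponding Ext-vanishing, each inclusion reduces to checking vanishing of certain $\Ext$-groups.

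The heart of the argument is a single Serre-duality identity. For $U,X\in\C$ and $i\in\Z$ I compute
\[ \Ext^i(U,\Phi X)=\C\bigl(U,\mathbb{S}(X[-d])[i]\bigr)\simeq\C\bigl(U,\mathbb{S}(X[i-d])\bigr)\simeq D\,\C\bigl(X[i-d],U\bigr)=D\Ext^{d-i}(X,U), \]
where the first isomorphism uses that $\mathbb{S}$ commutes with the shift, the second is the defining property $\C(A,\mathbb{S}B)\simeq D\C(B,A)$ of the Serre functor applied with $A=U$ and $B=X[i-d]$, and the last equality rewrites $\C(X[i-d],U)=\C(X,U[d-i])$. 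A symmetric computation, using instead the equivalent form $\C(A,B)\simeq D\C(B,\mathbb{S}A)$ and the fact that $\mathbb{S}\Phi^{-1}X\simeq X[d]$, gives
\[ \Ext^i(\Phi^{-1}X,U)=\C\bigl(\Phi^{-1}X,U[i]\bigr)\simeq D\,\C\bigl(U[i],\mathbb{S}\Phi^{-1}X\bigr)=D\,\C\bigl(U[i],X[d]\bigr)=D\Ext^{d-i}(U,X). \]

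Now I fix $X\in\U$ and run both inclusions. For $\Phi(\U)\subseteq\U$ I verify the second vanishing characterisation of $\U$ for the object $\Phi X$: for every $U\in\U$ and $i\in\{1,\dots,d-1\}$ the first identity gives $\Ext^i(U,\Phi X)\simeq D\Ext^{d-i}(X,U)$, and since $i\in\{1,\dots,d-1\}$ forces $d-i\in\{1,\dots,d-1\}$, the group $\Ext^{d-i}(X,U)$ vanishes because $X,U\in\U$ (first characterisation); hence $\Phi X\in\U$. For $\Phi^{-1}(\U)\subseteq\U$ I instead verify the first characterisation for $\Phi^{-1}X$: the second identity gives $\Ext^i(\Phi^{-1}X,U)\simeq D\Ext^{d-i}(U,X)=0$ for all $U\in\U$ and $i\in\{1,\dots,d-1\}$, again because $d-i$ stays in the same range and $\U$ satisfies the second vanishing condition. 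Combining the two inclusions as above yields $\Phi(\U)=\U$.

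The whole conceptual content sits in the self-duality of the index set under $i\mapsto d-i$ on $\{1,\dots,d-1\}$: this is precisely what lets one half of the $d$-cluster-tilting condition supply the vanishing needed by the other half after transporting through $\mathbb{S}$. I expect the only delicate point to be the shift bookkeeping in the two displayed identities, so I would state explicitly the one external input being used, namely that a Serre functor is a triangle functor and hence commutes with $[-d]$; once that is granted, the rest is the index symmetry and direct substitution.
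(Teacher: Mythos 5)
Your proof is correct and matches the standard argument: the paper itself gives no proof of this proposition but cites Iyama--Yoshino, whose proof is exactly this Serre-duality computation, transporting one Ext-vanishing characterisation of $\U$ into the other via the symmetry $i\mapsto d-i$ on $\{1,\ldots,d-1\}$. The two inputs you flag are both standard and harmless here: that $\mathbb{S}$ commutes with the shift follows either from the Bondal--Kapranov result that Serre functors are triangle functors or from a direct Yoneda argument with the defining isomorphism, and the implicit Hom-finiteness needed for $D^2\simeq\operatorname{id}$ in your second identity holds in all the categories the paper applies this to.
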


For $A$ of finite global dimension, we write
\[\nu_d = \nu\circ[-d] :\dm{A}\to\dm{A} .\]

Let $l,m\in\Z$, $l>0$. An algebra $A$ of finite global dimension is \emph{twisted $(m/l)$-Calabi--Yau} if there exists a $k$-linear automorphism $\phi\in\Aut_k(A)$ such that $\nu^l\simeq [m]\circ\phi^*$,
where $\phi^*$ denotes the auto-functor $\dm{A}\to\dm{A}$ induced by $\phi$.
If $\phi = \mathop{\rm id}$, then $A$ is $(m/l)$-Calabi--Yau. The algebra $A$ is \emph{(twisted) fractionally Calabi--Yau} if it is (twisted) $(m/l)$-Calabi--Yau for some $l$ and $m$. 
We remark that while, for an $(m/l)$-Calabi--Yau algebra $A$,
 $m/l$ is uniquely determined by $A$ as a rational number, being $(am/al)$--Calabi--Yau does not in general imply being $(m/l)$-Calabi--Yau for $a\ge2$.

\begin{prop}[{\cite[0.3]{my01}, see also \cite[Theorem~8.1]{cdim25}}]
  \label{cydim}
  Let $Q$ be a quiver of simply laced Dynkin type. The path algebra $kQ$ is $(h-2)/h$-Calabi--Yau, where $h$ denotes the Coxeter number of the Dynkin diagram.
If $Q$ is of type $A_1$, $D_{2m}$, $E_7$ or $E_8$, then $kQ$ is $\left(\frac{h}{2}-1\right)/\left(\frac{h}{2}\right)$-Calabi--Yau.
\end{prop}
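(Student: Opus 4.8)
The plan is to reduce the statement to a relation between the autoequivalences $\nu$ and $[1]$ of $\dm{kQ}$, and then to read off that relation combinatorially on the translation quiver $\mathbb{Z}Q$. By definition $kQ$ is $(m/l)$-Calabi--Yau precisely when $\nu^l\simeq[m]$; since $\nu=\tau\circ[1]$ and $\nu$, being a triangle functor, commutes with $[1]$ up to natural isomorphism, this is equivalent to $\tau^l\simeq[m-l]$. By Happel's description the indecomposable objects of $\dm{kQ}$ are the vertices of the mesh category of $\mathbb{Z}Q$, with $\tau$ the translation and $[1]$ a further automorphism of the translation quiver; the functors $\tau$, $[1]$ and $\nu$ are all standard, so each is determined up to natural isomorphism by the induced automorphism of $\mathbb{Z}Q$. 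It therefore suffices to locate these automorphisms and compare the relevant powers.

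First I would pin down $\nu$ on objects. Writing $P_1,\dots,P_n$ for the indecomposable projectives and $I_1,\dots,I_n$ for the injectives, the Serre functor satisfies $\nu(P_i)\simeq I_i$. Placing the slice $\mod kQ$ inside $\mathbb{Z}Q$ and tracking how $\tau^{-1}$ sweeps from the projectives through all indecomposable modules to the injectives and on to the shifted projectives $P_i[1]$, one sees that the number of steps is governed by the Coxeter number $h$ of the underlying Dynkin diagram. The target relation is then
\[ \nu^h(P_i)\simeq P_i[h-2]\qquad (1\le i\le n), \]
which on $K_0$ is the classical identity $c^h=\mathrm{id}$ for the Coxeter transformation $c$. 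As the $P_i$ generate $\dm{kQ}$ and both $\nu^h$ and $[h-2]$ are standard triangle functors, this object-level identity upgrades, by rigidity of the mesh category, to a natural isomorphism $\nu^h\simeq[h-2]$, equivalently $\tau^h\simeq[-2]$. This gives the $(h-2)/h$-Calabi--Yau property in general.

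For the types $A_1$, $D_{2m}$, $E_7$ and $E_8$ I would use that these are exactly the simply laced diagrams for which the longest element $w_0$ of the Weyl group acts as $-\mathrm{id}$; equivalently the Nakayama permutation $\sigma$ induced by $\nu$ within one fundamental domain is trivial and $c^{h/2}=-\mathrm{id}$ on $K_0$. In this case one may take the square root of the master relation: the identity $\nu^{h/2}(P_i)\simeq P_i[h/2-1]$ holds with no diagram-automorphism twist, whence $\nu^{h/2}\simeq[h/2-1]$ and $kQ$ is $(h/2-1)/(h/2)$-Calabi--Yau. For the remaining types ($A_n$ with $n\ge2$, $D_{2m+1}$, $E_6$) the involution $\sigma=-w_0$ is a nontrivial diagram automorphism, so $\nu^{h/2}$ differs from $[h/2-1]$ by $\sigma$ and only the full power $\nu^h$ is an untwisted shift.

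The main obstacle is the passage from the numerical identity on $K_0$ to the genuine functorial isomorphism, i.e.\ ruling out a hidden twist: one must certify that the standard autoequivalence $\nu^h\circ[-(h-2)]$, which fixes every indecomposable, is isomorphic to the identity, and that its diagram-automorphism component is trivial. This rests on the rigidity of the mesh category of $\mathbb{Z}Q$ together with the exact type-by-type determination of the permutation $\sigma=-w_0$; the same computation simultaneously establishes the untwisted relation in general and, for the four special diagrams, justifies halving the period.
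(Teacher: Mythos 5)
This proposition is not proved in the paper at all: it is imported verbatim from the literature, with the proof delegated to \cite{my01} (see also \cite[Theorem~8.1]{cdim20}). So there is no internal argument to compare against; what you have written is essentially a reconstruction of Miyachi--Yekutieli's original proof, and as an outline it is sound: reducing $(m/l)$-Calabi--Yau to $\tau^l\simeq[m-l]$ via $\nu=\tau\circ[1]$, doing the position bookkeeping on $\mathbb{Z}Q$ to get $\nu^h(P_i)\simeq P_i[h-2]$, and using that $-w_0$ is the trivial diagram automorphism exactly for $A_1$, $D_{2m}$, $E_7$, $E_8$ to halve the period, all match the cited source. Two caveats on where the real work lies. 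First, the $K_0$ identity $c^h=\mathrm{id}$ is strictly weaker than the object-level claim: since $[2]$ acts trivially on $K_0$, the class of an indecomposable determines it only up to even shift, so you cannot ``read off'' $\nu^h(P_i)\simeq P_i[h-2]$ from the Coxeter transformation alone -- the explicit coordinates of the projectives and injectives in $\mathbb{Z}Q$ must be computed, as your sweep argument implicitly requires. Second, and more seriously, the upgrade from an autoequivalence fixing every indecomposable to a natural isomorphism with the identity is precisely the hard technical content of \cite{my01}: it needs that $\nu^h\circ[-(h-2)]$ is a \emph{standard} equivalence (true, as a composite of standard functors) together with rigidity of the mesh category of $\mathbb{Z}Q$ for tree quivers -- equivalently, triviality of the scalar-twist part of the outer automorphism group of $kQ$, which holds because Dynkin quivers are trees with no multiple arrows. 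You correctly flag this as the main obstacle, but your proposal asserts rather than establishes it; as written it is a faithful sketch of the known proof with its central step cited in spirit rather than carried out, which is acceptable here exactly because the paper itself treats the result as a black box.
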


\begin{table}[htb]
  \[
  \begin{array}{|c||c|c|c|c|c|}
    \hline
    Q & A_n & D_n & E_6 & E_7 & E_8\\ \hline
    h & n+1 & 2(n-1) & 12 & 18 & 30\\ \hline
  \end{array}
  \]
\caption{Coxeter numbers of simply laced Dynkin diagrams.}
\end{table}

\begin{lma} \label{isom}
  The following isomorphisms of triangle functors $\dm{kQ}\to\dm{kQ}$ hold.
  \begin{align}
    [h-2] &\simeq \nu^h, \label{isom1} \\
    \nu\circ[1] &\simeq \tau^{1-h}, \label{isom2} \\
    [2] &\simeq  \tau^{-h} \simeq (\nu\circ[1])\circ\tau^{-1}. \label{isom3}
  \end{align}
\end{lma}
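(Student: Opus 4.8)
The plan is to observe that the first isomorphism \eqref{isom1} is nothing but the $(h-2)/h$-Calabi--Yau property of $kQ$ furnished by Proposition~\ref{cydim}: by definition this property means precisely $\nu^h \simeq [h-2]$, so no further argument is needed there. The remaining two isomorphisms will then be deduced purely formally from \eqref{isom1}, the defining relation $\tau = \nu\circ[-1]$ recorded in the preliminaries, and the fact that $\nu$, being a triangle functor, commutes with the shift $[1]$ up to natural isomorphism.

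The key computational ingredient is that this commutativity lets one separate powers of $\tau$ into powers of $\nu$ and of the shift: for every $n\in\Z$ one has $\tau^n \simeq \nu^n\circ[-n]$. For \eqref{isom2} I would apply this with $n = 1-h$ to obtain $\tau^{1-h} \simeq \nu^{1-h}\circ[h-1]$, and then check that this agrees with $\nu\circ[1]$. Cancelling a factor of $\nu$ and shifting reduces the desired identity $\nu^{1-h}\circ[h-1]\simeq\nu\circ[1]$ exactly to $\nu^h\simeq[h-2]$, that is, to \eqref{isom1}.

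For \eqref{isom3} I would compute $\tau^{-h}\simeq\nu^{-h}\circ[h]$ by the same rule, and then use \eqref{isom1} in the form $\nu^{-h}\simeq[2-h]$ to get $\tau^{-h}\simeq[2-h]\circ[h]\simeq[2]$. The second equality in \eqref{isom3} is then immediate: by \eqref{isom2} we have $\nu\circ[1]\simeq\tau^{1-h}$, whence $(\nu\circ[1])\circ\tau^{-1}\simeq\tau^{1-h}\circ\tau^{-1}=\tau^{-h}$.

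The whole statement is thus a formal consequence of the cited Calabi--Yau relation, and I do not expect any genuine obstacle. The only points demanding care are the justification that $\nu$ commutes with $[1]$ (so that powers of $\tau$ decompose as above) and the bookkeeping of the shift exponents; both are routine.
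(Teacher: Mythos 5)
Your proof is correct and follows essentially the same route as the paper: both arguments reduce everything to \eqref{isom1} and then manipulate powers of $\tau=\nu\circ[-1]$, using that $\nu$ commutes with the shift. Your decomposition $\tau^n\simeq\nu^n\circ[-n]$ is exactly what the paper uses implicitly when it writes $[2]\simeq\nu^{-h}\circ[h]\simeq\tau^{-h}$, and the remaining bookkeeping matches; the paper derives \eqref{isom1}, then the first half of \eqref{isom3}, then \eqref{isom2}, then the second half of \eqref{isom3}, while you go \eqref{isom1} $\Rightarrow$ \eqref{isom2} $\Rightarrow$ \eqref{isom3}, an immaterial reordering. The one point of divergence is the source of \eqref{isom1}: you take it to be literally the $(h-2)/h$-Calabi--Yau property of Proposition~\ref{cydim} and assert that ``no further argument is needed'', but the lemma claims isomorphisms of \emph{triangle} functors, whereas the Calabi--Yau condition as defined in the paper only posits an isomorphism $\nu^h\simeq[h-2]$ of functors, with no compatibility with the triangulated structure built into the definition. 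This is presumably why the paper cites \cite[Proposition~2.7(b)]{cdim20}, which supplies the isomorphism as triangle functors, rather than appealing to Proposition~\ref{cydim} directly. In practice the gap is minor --- every later use of the lemma (e.g.\ Lemma~\ref{periodicity} and the type $D$ and $E$ arguments) only needs the isomorphisms object-wise on $d$-cluster-tilting subcategories --- but to prove the lemma as literally stated you should either cite a triangle-functor version of the fractional Calabi--Yau property, as the paper does, or weaken the conclusion to natural isomorphisms of additive functors.
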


\begin{proof}
The existence of an isomorphism $[h-2]\simeq\nu^h$ of triangle functors follows from Proposition~\ref{cydim} and \cite[Proposition~2.7(b)]{cdim25}.
The remaining isomorphisms are straightforward consequences of the first one, together with the identity $\tau = \nu\circ [-1]$.
\end{proof}

\begin{lma} \label{periodicity}
 If $T(kQ)$ is $d$-representation-finite, then $(d+1)\mid 2(h-1)$.
\end{lma}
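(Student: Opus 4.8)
The plan is to transport the statement into $\dm{kQ}$ via the correspondence recalled in Section~2, and then to read off the divisibility from the two commuting symmetries that any $d$-cluster-tilting subcategory coming from $T(kQ)$ must carry. So suppose $T(kQ)$ is $d$-representation-finite. A basic $d$-cluster-tilting module then corresponds to a $d$-cluster-tilting subcategory $\U\subseteq\dm{kQ}$ with $(\nu\circ[1])(\U)=\U$, and since $\nu$ is a Serre functor on $\dm{kQ}$, Proposition~\ref{nud} gives $\nu_d(\U)=(\nu\circ[-d])(\U)=\U$ as well. Composing these two autoequivalences (and using that $\nu$ commutes with shifts) yields
\[
(\nu\circ[1])\circ(\nu\circ[-d])^{-1}\simeq[d+1],
\]
so $[d+1](\U)=\U$. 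Using \eqref{isom1} I also compute $(\nu\circ[1])^{h}\simeq\nu^{h}\circ[h]\simeq[h-2]\circ[h]=[2(h-1)]$, whence $[2(h-1)](\U)=\U$.

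Next I would package this group-theoretically. The set $H=\{\,n\in\Z : [n](\U)=\U\,\}$ is a subgroup of $\Z$, say $H=m\Z$ with $m\ge1$, and the previous paragraph shows $d+1\in H$ and $2(h-1)\in H$, so $m\mid d+1$ and $m\mid 2(h-1)$. Consequently it suffices to prove $m=d+1$, and since $m\mid d+1$ this reduces to the single inequality $m\ge d+1$.

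To prove $m\ge d+1$ I would use the Serre functor together with Proposition~\ref{nud} to force a non-vanishing morphism in the critical degree $d$. Fix an indecomposable $0\neq X\in\U$. From $\nu_d(\U)=\U$ we get $\nu X\simeq Z[d]$ for some $Z\in\U$, and Serre duality gives
\[
\dm{kQ}(X,Z[d])\simeq\dm{kQ}(X,\nu X)\simeq D\,\End(X)\neq0 .
\]
Now assume for contradiction that $m\le d$ and write $d=qm+r$ with $0\le r\le m-1$ and $q\ge1$. Since $[m](\U)=\U$, the object $W=Z[qm]$ lies in $\U$, and $Z[d]=W[r]$, so $\dm{kQ}(X,W[r])\neq0$. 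If $1\le r\le m-1$ then $1\le r\le d-1$, so $\Ext^{r}(X,W)=\dm{kQ}(X,W[r])=0$ because $X,W\in\U$ and $\U$ is $d$-cluster-tilting, a contradiction. Hence $r=0$, i.e.\ $m\mid d$; together with $m\mid d+1$ this forces $m=1$. But if $[1](\U)=\U$ then $Z[d-1]\in\U$ and, as $d\ge2$,
\[
\dm{kQ}(X,Z[d])=\Ext^{1}(X,Z[d-1])=0,
\]
again contradicting the non-vanishing above. Therefore $m\ge d+1$, so $m=d+1$, and $m\mid 2(h-1)$ gives $(d+1)\mid 2(h-1)$.

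The main obstacle is exactly the inequality $m\ge d+1$: the formal bookkeeping only delivers $m\mid\gcd(d+1,2(h-1))$, which is strictly weaker than the asserted divisibility, so one genuinely needs the sharp lower bound on the minimal shift preserving $\U$. The Serre-duality argument is what supplies it, the one delicate point being the division-with-remainder case distinction, where the residual possibility $m=1$ must be eliminated separately by the $\Ext^{1}$-vanishing above.
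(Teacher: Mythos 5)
Your proposal is correct and takes essentially the same route as the paper: both pass to a $(\nu\circ[1])$-equivariant $d$-cluster-tilting subcategory $\U\subseteq\dm{kQ}$, derive $[d+1](\U)=\U$ by combining this with the $\nu_d$-invariance from Proposition~\ref{nud}, derive $[2(h-1)](\U)=\U$ from Lemma~\ref{isom}\eqref{isom1}, and then use the $\Ext$-vanishing inside $\U$ to exclude small shift periods. The only (cosmetic) divergence is the final step: the paper rules out $\U[r]=\U$ for $1\le r\le d-1$ directly, via $\Hom_{\dm{kQ}}(X,(X[-r])[r])=\End(X)\ne0$, and concludes $\gcd(d+1,2(h-1))=d+1$, whereas you bound the minimal shift period from below using the Serre-duality witness $\Hom_{\dm{kQ}}(X,\nu X)\simeq D\End(X)\ne0$ together with division with remainder, handling the residual case $m=1$ separately --- slightly longer, but equally valid.
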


\begin{proof}
Let $\U$ be a $(\nu\circ[1])$-equivariant $d$-cluster-tilting subcategory of $\dm{kQ}$. 
By Lemma~\ref{isom}\eqref{isom1}, there are isomorphisms
\[ [2(h-1)] = [h-2]\circ[h] \simeq \nu^h\circ [h] \simeq (\nu\circ[1])^h \]
of triangle functors $\dm{kQ}\to\dm{kQ}$ and thus $\U[2(h-1)] = (\nu\circ[1])^h(\U) = \U$.
On the other hand,
\[ \U[d+1] = \nu_d(\U[d+1]) = (\nu\circ[1])(\U) = \U, \]
and it follows that $\U[g] = \U$, where $g=\gcd(d+1,2(h-1))$.

For all $X,Y\in\U$ and $r\in\{1,\ldots,d-1\}$, we have $\Hom_{\dm{kQ}}(X,Y[r]) = \Ext^r(X,Y) = 0$. Hence $\U[r] = \U$ is impossible for all $r<d$ and, consequently, $g\ge d$. With $g\mid (d+1)$ and $d\ge2$, this implies that $g=d+1$, and thus $(d+1)\mid 2(h-1)$.
\end{proof}

\section{Proof of Theorem~\ref{nakayama}}

Theorem~\ref{nakayama} is an application of \cite[Section~5]{di20}, in which a characterisation of all $d$-representation-finite self-injective Nakayama algebras was given.
Throughout this section, let $B$ be a ring-indecomposable symmetric Nakayama $k$-algebra with $n$ simple modules and Loewy length $\lol(B) = an+1$.
Specifying Theorem~5.1 of \cite{di20} to this situation, we get the following result.
As usual, $d$ is an integer greater than or equal to $2$.

\begin{prop} \label{siny}
  The algebra $B$ is $d$-representation-finite if and only if at least one of the following conditions holds:
  \begin{itemize}
  \item[(a)] $((an+1)(d-1) +2)\mid 2n$;
  \item[(b)] $((an+1)(d-1) +2)\mid tn $,\quad where\quad $t = \gcd(d+1, 2an)$.
  \end{itemize}
\end{prop}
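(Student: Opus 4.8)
The statement is the specialisation of \cite[Theorem~5.1]{di20} to the symmetric case, so my plan is to recall that classification of $d$-representation-finite self-injective Nakayama algebras, translate it through Happel's theorem, and reduce the resulting divisibilities to (a) and (b). As a self-injective Nakayama algebra, $B$ has $n$ simple modules and Loewy length $\lol(B)=an+1=:\ell$, and it is symmetric precisely because $\ell\equiv1\pmod n$; in particular $\gcd(n,\ell)=1$. Generalising the identification $\stmod T(kA_{n})\simeq\dm{kA_{n}}/\langle\nu\circ[1]\rangle$ used for trivial extensions, there is a triangle equivalence $\stmod B\simeq\dm{kA_{\ell-1}}/\langle\tau^{n}\rangle$. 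Since $B$ is symmetric its Serre functor is the syzygy $\Omega$, which corresponds under this equivalence to the shift $[-1]$ (indeed $\nu\circ[1]\simeq\tau^{1-\ell}=(\tau^{n})^{-a}$ becomes the identity in the quotient). Hence a basic $d$-cluster-tilting module of $B$ corresponds to a $\langle\tau^{n}\rangle$-invariant $d$-cluster-tilting subcategory of $\dm{kA_{\ell-1}}$, and $B$ is $d$-representation-finite if and only if such a subcategory exists.

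Next I would isolate the two arithmetic quantities appearing in the statement. In $\dm{kA_{\ell-1}}$ the Coxeter number is $h=\ell$, so Lemma~\ref{isom}\eqref{isom1} gives $\nu^{\ell}\simeq[\ell-2]$ and therefore
\[\nu_{d}^{\,\ell}=\nu^{\ell}\circ[-d\ell]\simeq[\ell-2-d\ell]=\bigl[-\bigl((an+1)(d-1)+2\bigr)\bigr].\]
Thus, writing $N=(an+1)(d-1)+2$, the Serre-invariance $\nu_{d}(\U)=\U$ of Proposition~\ref{nud} forces $\U[N]=\U$, which is the origin of the number $N$ in both conditions. For the gcd, the lift of $\U$ is automatically $(\nu\circ[1])$-invariant by the relation above, so the argument of Lemma~\ref{periodicity} applies verbatim in the cover (where $2(h-1)=2an$) and yields the necessary condition $(d+1)\mid 2an$. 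In particular $t=\gcd(d+1,2an)=d+1$ whenever $B$ is $d$-representation-finite, which is what makes the divisibility $N\mid tn$ in (b) meaningful.

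The remaining and decisive step is the combinatorial one hidden in \cite[Theorem~5.1]{di20}: classifying the $d$-cluster-tilting subcategories of $\dm{kA_{\ell-1}}$ and deciding which of them can be chosen $\tau^{n}$-invariant. This is governed by how $\nu_{d}$ acts on the translation quiver $\Z A_{\ell-1}$ --- a pure shift along the $\Z$-direction when $d$ is odd, and a glide reflection exchanging the two Dynkin ends when $d$ is even --- together with the period $N$ computed above. I expect this to be the main obstacle: one must align the indexing conventions of \cite{di20} with ours and keep track of the reflection, since it is exactly the distinction between $\tau^{n}$ stabilising the tight configuration (giving $N\mid 2n$) and only stabilising the coarser $t$-fold one (giving $N\mid tn$) that separates condition (a) from condition (b). As a consistency check one verifies that (a) or (b) holds precisely for the triples in Theorem~\ref{nakayama}: for $a=1$, $d=2n-1$ one has $N=2n^{2}$ and $t=2n$, so (b) reads $2n^{2}\mid 2n^{2}$; the triple $(a,n,d)=(1,3,2)$ gives $N=6=2n$, so (a) holds while (b) (with $tn=9$) fails, showing that both conditions are genuinely needed; and $(1,6,2),(2,3,2)$ give $N=9$ with $t=3$, so (b) reads $9\mid 18$ and $9\mid 9$ respectively.
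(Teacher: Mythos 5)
Your proposal matches the paper's own treatment: Proposition~\ref{siny} is proved there by nothing more than specialising \cite[Theorem~5.1]{di20} to a symmetric Nakayama algebra with Loewy length $\ell=an+1$, which is exactly the reduction you describe, and your supporting scaffolding is correct and consistent with the rest of the paper --- the cover $\dm{kA_{\ell-1}}/\langle\tau^{n}\rangle$ with $h=\ell$, the computation $\nu_d^{\ell}\simeq\bigl[-\bigl((an+1)(d-1)+2\bigr)\bigr]$ identifying the modulus $N$, the deduction $(d+1)\mid 2an$ (hence $t=d+1$) via the argument of Lemma~\ref{periodicity}, and the numerical checks against Lemma~\ref{conditiona} and Theorem~\ref{nakayama}. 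Since the paper likewise leaves the decisive combinatorial classification entirely to \cite{di20}, your deferral of that step is not a gap relative to the paper's own proof.
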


\begin{lma} \label{conditiona}
  The condition (a) in Proposition~\ref{siny} holds if and only if $(a,n,d)=(1,3,2)$.
\end{lma}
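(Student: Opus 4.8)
The plan is to treat condition (a), namely $\bigl((an+1)(d-1)+2\bigr)\mid 2n$, purely as a divisibility problem in the positive integers $a\ge1$, $n\ge1$, $d\ge2$, and to show that the combined size and arithmetic restrictions force $(a,n,d)=(1,3,2)$. Write $N=(an+1)(d-1)+2$ for the left-hand divisor. Since $N$ and $2n$ are both positive and $N\mid 2n$, we must have $N\le 2n$; this single inequality will carry most of the argument.

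First I would bound $N$ from below. Because $d-1\ge1$, one has $N\ge (an+1)+2=an+3$, with equality exactly when $d=2$. Combining with $N\le 2n$ gives $an+3\le 2n$, hence $an\le 2n-3<2n$ and therefore $a=1$, as $a\ge1$. Feeding $a=1$ back in, the inequality $N\le 2n$ becomes $n+3\le 2n$, i.e.\ $n\ge3$, a bound I will need at the end.

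Next I would pin down $d$. With $a=1$ we have $N=(n+1)(d-1)+2$, so $N\le 2n$ forces $(n+1)(d-1)\le 2(n-1)$. Since $n+1>n-1>0$, any value $d-1\ge2$ would give $(n+1)(d-1)\ge 2(n+1)>2(n-1)$, a contradiction; hence $d-1=1$, that is, $d=2$. At this point $N=n+3$, and the surviving requirement is $(n+3)\mid 2n$. The key reduction is to write $2n=2(n+3)-6$, so that $(n+3)\mid 2n$ is equivalent to $(n+3)\mid 6$.

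Finally, combining $(n+3)\mid 6$ with the constraint $n\ge3$ leaves only $n+3=6$, i.e.\ $n=3$: the divisors of $6$ are $1,2,3,6$, and $n\ge3$ forces $n+3\ge6$. This yields $(a,n,d)=(1,3,2)$, and conversely substituting these values gives $N=6=2n$, so condition (a) does hold. I do not expect a genuine obstacle; the only point requiring care is to run the two size estimates in the right order — first extracting $a=1$, then $d=2$ — so that the concluding divisibility $(n+3)\mid 6$ is clean and the lower bound $n\ge3$ is in hand to discard the spurious small divisors of $6$.
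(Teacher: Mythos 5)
Your proof is correct and follows essentially the same route as the paper: both treat condition (a) as a pure divisibility problem and use size estimates on $N=(an+1)(d-1)+2$ to force $a=1$ and $d=2$. The paper is marginally quicker at the final step --- since $N>n$ once $a=1$, the divisibility $N\mid 2n$ upgrades immediately to the equality $N=2n$, which it solves as $(n+1)(3-d)=4$, whereas you pin down $d=2$ first and then reduce $(n+3)\mid 2n$ to $(n+3)\mid 6$; the two endgames are interchangeable.
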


\begin{proof}
  As $(an+1)(d-1) +2>an$, the condition (a) is equivalent to $a=1$ and $(n+1)(d-1)+2=2n$. The last equation can be rewritten as $(n+1)(3-d) =4$. Since $d\ge2$, this holds if and only if $n=3$ and $d=2$.
\end{proof}

\begin{lma} \label{divides2n}
  If $B$ is $d$-representation-finite then $(d+1)\mid 2n$. 
\end{lma}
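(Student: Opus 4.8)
The plan is to invoke Proposition~\ref{siny} and treat its two alternatives separately, the second being the substantial one. If condition~(a) holds, then Lemma~\ref{conditiona} gives $(a,n,d)=(1,3,2)$, for which $d+1=3$ divides $2n=6$, so nothing more is needed. The rest of the argument concerns condition~(b), namely $N\mid tn$ with $N=(an+1)(d-1)+2$ and $t=\gcd(d+1,2an)$.

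First I would rewrite the relevant quantities. Writing $D=d+1$, one has the identity $N=(an+1)D-2an$, so that $N\equiv-2an\pmod D$ and hence $\gcd(N,D)=\gcd(2an,D)=t$. In particular $t$ divides $N$, so setting $N_1=N/t$ and $D_1=D/t$ produces \emph{coprime} positive integers $N_1,D_1$. Since $t\mid N$, condition~(b) is equivalent to $N_1\mid n$, and I write $n=N_1q$ with $q\ge1$.

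The key step is to extract a single homogeneous identity. Dividing $N=(an+1)D-2an$ by $t$ gives $N_1=(an+1)D_1-c$ with $c=2an/t=2anD_1/D$; clearing denominators and using $D-2=d-1$ yields
\[ (d+1)N_1 = D_1\bigl(an(d-1)+(d+1)\bigr), \]
and substituting $an=aN_1q$ turns this into
\[ aqD_1N_1(d-1) = (d+1)(N_1-D_1). \]
Since the left-hand side is strictly positive (as $d\ge2$), we must have $N_1>D_1$; dividing by $N_1$ then gives the inequality $aqD_1(d-1)<d+1$.

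It remains to exploit this inequality together with $\gcd(N_1,D_1)=1$. For $d\ge3$ one has $(d+1)/(d-1)\le2$, forcing $aqD_1=1$ and hence $a=q=D_1=1$; then $D_1=1$ means $D=t=\gcd(D,2an)$, so $D\mid2an=2n$, as required. For $d=2$ the inequality only gives $aqD_1\in\{1,2\}$; the value $1$ is handled as above, while $aqD_1=2$ substituted into the second displayed identity gives $N_1=3D_1$, whence coprimality forces $D_1=1$ and $N_1=3$. Then $n=N_1q$ is divisible by $3$ and $D=3$, so again $D\mid2n$. The main obstacle is guessing and deriving the homogeneous identity relating $N_1,D_1,q$; once it is in hand, the coprimality $\gcd(N_1,D_1)=1$ is precisely what pins down $D_1=1$ in each case, and the bound $d\ge3$ versus $d=2$ is the only genuine case split.
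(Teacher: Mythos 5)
Your proof is correct, but it takes a genuinely different route from the paper's. The paper proves this lemma directly in the stable module category, with no reference to the numerical criterion of Proposition~\ref{siny}: for a ring-indecomposable symmetric Nakayama algebra, $\tau\simeq\Omega^2$ cyclically permutes the $n$ simple modules, and since indecomposables are determined by their tops and Loewy lengths this gives $\Omega^{2n}(M)\simeq M$ for all $M$; for a $d$-cluster-tilting module $M$ the Serre functor (Proposition~\ref{nud}) gives $\Omega^{d+1}(M)\simeq M$, hence $\Omega^{g}(M)\simeq M$ with $g=\gcd(d+1,2n)$, and the vanishing $\Ext^r(M,M)=0$ for $1\le r\le d-1$ forces $g\ge d$, whence $g=d+1$. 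You instead extract the divisibility purely arithmetically from Proposition~\ref{siny}, and your computations check out: $N=(an+1)(d+1)-2an$ does give $\gcd(N,d+1)=t$, condition (b) does reduce to $N_1\mid n$, the identity $aqD_1N_1(d-1)=(d+1)(N_1-D_1)$ is correct, and the case splits ($aqD_1=1$ for $d\ge3$; $aqD_1\in\{1,2\}$ for $d=2$, with $N_1=3D_1$ and coprimality forcing $D_1=1$, $N_1=3$) are sound, as is disposing of condition (a) via Lemma~\ref{conditiona}. There is no circularity in inverting the paper's logical order (the paper proves the lemma first and then uses it to simplify (b) to \eqref{newb}), since Proposition~\ref{siny} is an external result from \cite{di20}. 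As for what each approach buys: the paper's argument is conceptual and independent of the numerical criterion, explaining where the period $2n$ comes from and yielding the stronger module-level statement $\Omega^{\gcd(d+1,2n)}(M)\simeq M$; your argument wrings more out of the arithmetic---your homogeneous identity essentially subsumes the paper's subsequent endgame (the substitution $2n=b(d+1)$ leading to $(b(d-1)+2)\mid b(d+1)$ and $(c-1)(d-1)b=2(b-c)$ is the same kind of analysis), so your route could in principle streamline the whole proof of Theorem~\ref{nakayama}, at the cost of leaning on the full strength of the cited characterisation where the paper's proof of this particular lemma needs none of it.
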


In particular, the condition (b) in Proposition~\ref{siny} is equivalent to 
\begin{equation} \label{newb}
  ((an+1)(d-1)+2)\mid (d+1)n.
\end{equation}

\begin{proof}
Since $B$ is a ring-indecomposable symmetric Nakayama algebra, the Auslander--Reiten translate $\tau\simeq \Omega^2:\stmod B\to\stmod B$ cyclically permutes the simple modules \cite[Corollary~IV.2.12]{ars95}. 
Hence $\Omega^{2n}(S)\simeq S$ for all simple modules $S\in\mod B$.
Moreover, $\lol(\Omega^2(M))=\lol(M)$ for all $M\in\mod B$ \cite[Corollary~IV.2.9]{ars95}, and indecomposable $B$-modules are classified by their tops and Loewy lengths \cite[Lemma~IV.2.5]{ars95}, so it follows that $\Omega^{2n}(M)\simeq M$ for all $M\in\stmod B$.

Let $M$ be a $d$-cluster-tilting $B$-module. Then $\Omega^{d+1}(M) = \mathbb{S}_d(M)\simeq M$ in $\stmod B$ which, combined with $\Omega^{2n}(M)\simeq M$, yields $\Omega^{\gcd(d+1,2n)}(M)\simeq M$. Since $\sthom(\Omega^r(M),M) \simeq \Ext^r(M,M)=0$ for all $r\in\{1,\ldots,d-1\}$, this implies that $\gcd(d+1,2n)\ge d$ and, as $d\ge2$, it follows that $(d+1)\mid 2n$. 
\end{proof}

\begin{lma} \label{a=2}
  If the condition (b) in Proposition~\ref{siny} holds, then either $(a,n,d) = (2,3,2)$ or $a=1$. 
\end{lma}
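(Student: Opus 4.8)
The plan is to argue entirely by size estimates on the divisibility relation \eqref{newb}, which (via Lemma~\ref{divides2n}) is the reformulation of condition~(b). Writing $N=(an+1)(d-1)+2$, this relation reads $N\mid (d+1)n$, and since both sides are positive integers it forces the inequality $N\le (d+1)n$. Throughout I would assume $a\ge2$ (the case $a=1$ being the stated alternative) and aim to show that the only remaining possibility is $(a,n,d)=(2,3,2)$.

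First I would bound $d$. From $a\ge2$ we have $N\ge(2n+1)(d-1)+2$, so the inequality $N\le(d+1)n$ gives $(2n+1)(d-1)+2\le(d+1)n$. Expanding the left-hand side and collecting terms rewrites this as $n(d-3)+(d+1)\le0$. Since $n\ge1$ and $d\ge2$, the left-hand side is strictly positive whenever $d\ge3$, which is a contradiction; hence $d=2$.

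With $d=2$ the quantity $N$ simplifies to $an+3$, and \eqref{newb} becomes $(an+3)\mid 3n$. The same size bound $an+3\le 3n$ now gives $an\le 3n-3$, whence $a<3$, and together with $a\ge2$ this forces $a=2$. It remains to solve $(2n+3)\mid 3n$: since $3(2n+3)-2\cdot 3n=9$, the divisor $2n+3$ must divide $9$. As $a=d=2$, the bound $2n+3\le 3n$ gives $n\ge3$, so $2n+3\ge9$, leaving only $2n+3=9$, that is $n=3$. This yields $(a,n,d)=(2,3,2)$, as desired.

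The computations here are elementary, so the main thing to get right is the bookkeeping of strict versus non-strict inequalities. The fundamental estimate $N\le(d+1)n$ must be kept \emph{non-strict}, because the boundary solution $(2,3,2)$ satisfies $N=9=(d+1)n$ and would be lost under a strict bound; by contrast, the step eliminating $d\ge3$ and the step eliminating $a\ge3$ must each produce a genuine strict contradiction. Verifying that these two uses of the bound are compatible — strict where elimination is needed, tight exactly at the surviving triple — is the only delicate point.
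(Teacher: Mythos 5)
Your proof is correct and takes essentially the same approach as the paper: both convert condition~(b), via \eqref{newb}, into the size estimate $(an+1)(d-1)+2\le(d+1)n$ and finish by elementary case analysis. The only cosmetic difference is that the paper deduces $(a-1)(d-1)\in\{0,1\}$ in a single step where you eliminate $d\ge3$ and then $a\ge3$ sequentially, and your explicit check that $(2n+3)\mid 3n$ forces $2n+3\mid 9$, hence $n=3$, spells out a step the paper leaves implicit.
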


\begin{proof}
If (b) holds then
\[((an+1)(d-1)+2) \le tn = (d+1)n,\]
which is equivalent to
\[(a-1)(d-1)n + d+1\le 2n .\]
Thus $(a-1)(d-1)\in\{0,1\}$, implying that either $a=1$ or $a=d=2$. In the latter case, it follows from \eqref{newb} that $n=d+1=3$.
\end{proof}

Summarising the results above, we see that $B$ is $d$-representation-finite if and only if either $(a,n,d)\in\{(1,3,2), (2,3,2)\}$, or $a=1$, $(d+1)\mid 2n$ and
\begin{equation} \label{newconditionb}
  ((n+1)(d-1) + 2)\mid (d+1)n.
\end{equation}
To prove Theorem~\ref{nakayama}, we need to show that in the latter case, either $d=2n-1$ or $(d,n) = (2,6)$ holds.

By Lemma~\ref{divides2n}, there exists an integer $b\ge1$ such that $2n = b(d+1)$. Multiplying both sides of \eqref{newconditionb} by $2$ gives
\[ 2((n+1)(d-1) +2) \mid 2n(d+1) \,. \]
Since
\begin{align*}
  2((n+1)(d-1) +2) &= 2n(d-1) + 2(d+1) = b(d+1)(d-1) +2(d+1) \\
  &= (b(d-1)+2)(d+1) \quad \mbox{and} \\ 
  2n(d+1) &= b(d+1)^2 \,,
\end{align*}
this is equivalent to
\begin{equation}
  (b(d-1)+2) \mid b(d+1). \label{divides}
\end{equation}
Let $c\ge1$ be such that $b(d+1) = c(b(d-1)+2)$.
Then $(c-1)(d-1)b = 2(b-c)<2b$, whence $(c-1)(d-1)<2$, so either $c=1$, or $c=d=2$.
Thus \eqref{divides} holds if and only if either $b(d-1)+2 = b(d+1)$, or $d=2$ and $3b=2(b+2)$. In the former case $b=1$ and thus $2n = b(d+1) = d+1$.
In the latter, $b=4$ and $2n = 4(2+1) = 12$; so $(d,n) = (2,6)$.
This concludes the proof of Theorem~\ref{nakayama}.

\section{Proof of Theorem~\ref{ade}}

To start with, observe that if $Q$ is a quiver of Dynkin type $A_n$, then $T(kQ)$ is stably equivalent to a symmetric Nakayama algebra with $n$ simple modules and Loewy length $n+1$.
By Theorem~\ref{nakayama}, such an algebra is $d$-representation-finite if and only if
either $d=2n-1$, or $d=2$ and $n\in\{3,6\}$. Since $d$-representation-finiteness is preserved by stable equivalence, this establishes the claim in Theorem~\ref{ade} for Dynkin quivers of type $A$. 

\begin{prop}
Let $Q$ be an acyclic quiver. If $T(kQ)$ is $d$-representation-finite for some $d$, then $Q$ is a simply laced Dynkin quiver.
\end{prop}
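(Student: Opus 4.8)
The plan is to establish the contrapositive in two stages: first show that if $Q$ is not Dynkin, then $T(kQ)$ fails to be $d$-representation-finite, and separately rule out the non-simply-laced Dynkin types. Since the hypothesis states $Q$ is acyclic, the path algebra $kQ$ is hereditary of finite global dimension, and $T(kQ)$ is self-injective. The key structural input is the representation type of $kQ$: by Gabriel's theorem, $kQ$ is representation-finite exactly when $Q$ is a simply laced Dynkin quiver. So the real content is to argue that $d$-representation-finiteness of the self-injective algebra $T(kQ)$ forces $kQ$ to be representation-finite.

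First I would use the standard fact, recalled in the preliminaries, that $T(kQ)$ self-injective implies $\stmod T(kQ)$ is triangulated with Serre functor $\Omega$, and that a basic $d$-cluster-tilting module of $T(kQ)$ yields a $d$-cluster-tilting subcategory $\U$ of $\stmod T(kQ)$. The crucial point is finiteness: a $d$-cluster-tilting \emph{module} $M$ has, by definition, only finitely many indecomposable summands, so $\add M$ contains only finitely many indecomposables. If $T(kQ)$ is of infinite representation type, one expects $\U$ to be forced to contain infinitely many non-isomorphic indecomposables, a contradiction. The mechanism is that $\U$ is closed under the Serre-functor shift $\mathbb{S}\circ[-d]=\Omega^{-(d+1)}$ by Proposition~\ref{nud}, so $\U$ is invariant under a nontrivial self-equivalence; when $T(kQ)$ has infinitely many indecomposables this orbit structure, together with the vanishing conditions defining a $d$-cluster-tilting subcategory, cannot be satisfied by a finite set of indecomposables.

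The cleanest route, however, is to reduce to the representation type of $kQ$ directly. When $Q$ is Dynkin, Happel's equivalence $\dm{kQ}\simeq\stmod^\Z T(kQ)$ and the gradability result of Gordon--Green let us transfer $d$-cluster-tilting subcategories between $\dm{kQ}$ and $\stmod T(kQ)$; but this machinery presupposes finite global dimension arguments that work uniformly. For non-Dynkin $Q$, the obstruction is that $kQ$ is of infinite representation type, hence $T(kQ)$ is too (trivial extensions preserve representation type for hereditary algebras), so no finite module can be $d$-cluster-tilting: a $d$-cluster-tilting module must contain every projective-injective indecomposable and be an additive generator-cogenerator in a strong sense, and higher Auslander--Reiten theory forces such $\U$ to have a ``$d$-almost-split'' structure filling out infinitely many indecomposables. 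I would invoke that a $d$-representation-finite self-injective algebra has at most finitely many indecomposables in $\U$ yet the $(d+1)$-angulated structure on $\U$ (from Iyama--Oppermann) generates an unbounded orbit under $\Omega$ unless the ambient category is itself representation-finite.

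The main obstacle will be making the passage from ``infinite representation type'' to ``no finite $d$-cluster-tilting module'' fully rigorous, since $\U$ being finite does not immediately contradict $\stmod T(kQ)$ being infinite — one must show $\U$ cannot be a proper finite piece. The right tool is the equivalence reducing everything to $\dm{kQ}$: a $(\nu\circ[1])$-equivariant $d$-cluster-tilting subcategory of $\dm{kQ}$ would, via $\nu_d$-invariance and the $\tau$-periodicity encoded in Lemma~\ref{periodicity}, force the Coxeter number $h$ to satisfy a divisibility constraint $(d+1)\mid 2(h-1)$, which only has solutions compatible with finite AR-quiver orbits when $Q$ is Dynkin. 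For non-Dynkin $Q$ there is no finite Coxeter number and the AR-quiver $\Z Q$ has components of type $\Z A_\infty$ with no periodicity, so no such equivariant finite subcategory exists. I would therefore argue: the existence of a $d$-cluster-tilting module gives a $\nu_d$-invariant $\U\subset\dm{kQ}$ with finitely many indecomposables; invariance under the Serre shift together with the non-periodicity of $\tau$ on the (necessarily infinite, non-periodic) AR-components of a non-Dynkin $kQ$ is incompatible with finiteness, completing the contradiction and forcing $Q$ to be simply laced Dynkin.
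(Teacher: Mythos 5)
Your central mechanism is not a gap but an actual error: the inference ``$T(kQ)$ of infinite representation type $\Rightarrow$ no finite $d$-cluster-tilting module'' is false for $d\ge2$. A $d$-cluster-tilting subcategory for $d\ge2$ is precisely \emph{not} required to contain all indecomposables, and there are standard counterexamples to your principle: preprojective algebras of Dynkin type are finite-dimensional self-injective algebras admitting $2$-cluster-tilting modules (Geiss--Leclerc--Schr\"oer), yet they are representation-infinite once the diagram has five or more vertices. So representation type alone cannot carry the argument, and your appeals to ``unbounded $\Omega$-orbits'' do not help either: by Proposition~\ref{nud} a $d$-cluster-tilting $\U\subset\stmod T(kQ)$ is $\Omega^{d+1}$-invariant, so if $\U$ is finite its objects are simply $\Omega$-periodic --- a situation that is perfectly possible over representation-infinite symmetric algebras (tubes, periodic algebras), with no contradiction in sight. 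Your fallback reduction to $\dml[kQ]$-type arguments also fails exactly where you need it: the correspondence between $d$-cluster-tilting modules of $T(kQ)$ and $(\nu\circ[1])$-equivariant $d$-cluster-tilting subcategories of $\dm{kQ}$ rests on Happel's theorem \emph{plus} gradability of all $T(kQ)$-modules, which the paper cites (Gordon--Green) only for Dynkin $Q$; moreover Lemma~\ref{periodicity} presupposes a finite Coxeter number, so there is no ``divisibility constraint'' to violate in the non-Dynkin case. Even granting the reduction, an equivariant $\U$ has infinitely many indecomposables (finitely many $(\nu\circ[1])$-orbits, not finitely many objects, as you assert), and mere non-periodicity of $\tau$ on $\Z A_\infty$-components does not by itself exclude such an infinite equivariant subcategory.

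The paper's actual proof uses a different and much sharper tool: by Theorem~1.3 of \cite{cdim20}, if $T(kQ)$ is $d$-representation-finite then $kQ$ must be \emph{twisted fractionally Calabi--Yau}, i.e.\ $\nu^l\simeq[m]\circ\phi^*$ for some $l>0$. The non-Dynkin case is then killed by a two-line Auslander--Reiten argument: such an isomorphism forces $\nu_1^{-l}(kQ)\simeq kQ[m-l]$, whereas for a representation-infinite hereditary algebra $\nu_1^{-l}(kQ)=\tau^{-l}(kQ)$ is a non-projective stalk complex concentrated in degree zero \cite[Proposition~VIII.1.15]{ars95}, hence isomorphic to no shift of $kQ$. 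If you want to salvage your outline, the missing idea is precisely this Calabi--Yau obstruction (or, alternatively, the Erdmann--Holm complexity bound mentioned in the introduction), not representation type. Finally, note that your stage of ``ruling out non-simply-laced Dynkin types'' is vacuous: path algebras of quivers only ever realize simply laced diagrams.
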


\begin{proof}
  By \cite[Theorem~1.3]{cdim25}, if $T(kQ)$ is $d$-representation-finite then $kQ$ is twisted fractionally Calabi--Yau.
This means that  $\nu^l\simeq [m]\circ \phi^*$ for some $l,m\in\Z$, $l>0$ and $\phi\in\Aut_k(kQ)$, and thus $\nu_1^{-l}(kQ)\simeq (\phi^*)^{-1}(kQ)[l-m]\simeq kQ[l-m]$.

On the other hand, if $Q$ is not Dynkin then $kQ$ is a representation-infinite hereditary algebra.
Consequently, for all $l>0$, the object $\nu_1^{-l}(kQ)=\tau^{-l}(kQ)$ in $\dm{kQ}$ is a non-projective stalk complex concentrated in degree zero \cite[Proposition~VIII.1.15]{ars95}.
So in this case, $\nu_1^{-l}(kQ)$ is not isomorphic to $kQ[l-m]$ for any $m\in\Z$, and thus $kQ$ cannot be twisted fractionally Calabi--Yau.
\end{proof}

It remains to consider quivers of Dynkin types $D$ and $E$.
For a vertex  $i$ of the quiver $Q$, we denote by $P_i$ the projective cover of the simple $kQ$-module supported at $i$, and by
$\mathcal{O}_i = \add\{\tau^l(P_i) \mid l\in\Z\}\subset\dm{kQ}$ the $\tau$-orbit of $P_i$ in $\dm{kQ}$. Recall that, since $Q$ is Dynkin, every indecomposable object in $\dm{kQ}$ is contained in a unique orbit $\mathcal{O}_i$.

\begin{figure}[htb]
  \centerline{
    \xymatrix@=1em{
      1\ar@{-}[r] & 2\ar@{-}[r] & 3\ar@{-}[r]\ar@{-}[d] & \cdots \ar@{-}[r] & m \\
      & & 4 & &
    }
  }

  \caption{The Dynkin diagram $\mathbb{E}_m$, $m\in\{6,7,8\}$.}
  \label{Em}
\end{figure}
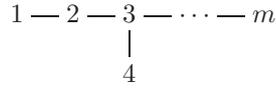

\begin{prop}
  Let $Q$ be a quiver of Dynkin type $E$. Then $T(kQ)$ is not $d$-representation-finite for any $d\ge2$. 
\end{prop}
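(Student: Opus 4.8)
The plan is to prove that no Dynkin quiver of type $E$ admits a $d$-representation-finite trivial extension by exploiting the necessary divisibility condition of Lemma~\ref{periodicity}. For type $E$, the Coxeter numbers are $h=12,18,30$ (for $E_6,E_7,E_8$ respectively), so Lemma~\ref{periodicity} tells us that $(d+1)\mid 2(h-1)$, i.e.\ $(d+1)\mid 22$, $(d+1)\mid 34$, or $(d+1)\mid 58$. Since $d\ge2$, this narrows $d$ to a very short list of candidates in each case: for $E_6$, $d+1\in\{11,22\}$; for $E_7$, $d+1\in\{17,34\}$; for $E_8$, $d+1\in\{29,58\}$.

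First I would translate the problem entirely into the combinatorics of the orbit structure on $\dm{kQ}$. By the discussion preceding Proposition~\ref{nud}, a basic $d$-cluster-tilting module of $T(kQ)$ corresponds to a $(\nu\circ[1])$-equivariant $d$-cluster-tilting subcategory $\U$ of $\dm{kQ}$. Such a $\U$ must be closed under $\nu_d=\nu\circ[-d]$ by Proposition~\ref{nud}, and combining this with $(\nu\circ[1])$-equivariance shows $\U$ is closed under $[-(d+1)]$, hence periodic. The key numerical constraints come from two facts: the Serre functor relation packaged in Lemma~\ref{isom}, and the vanishing condition $\Hom_{\dm{kQ}}(X,Y[r])=0$ for $X,Y\in\U$ and $1\le r\le d-1$. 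The strategy is to count: a $d$-cluster-tilting subcategory of the mesh category of $\Z Q$ has a prescribed density of indecomposables, and one checks whether the forced periodicity $\U[d+1]=\U$ together with $(\nu\circ[1])$-equivariance can be realised simultaneously with the $\Ext$-orthogonality.

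The cleanest route is probably to refine Lemma~\ref{periodicity}. After establishing $(d+1)\mid 2(h-1)$, I would derive a second, complementary divisibility or counting relation — for instance, a global count of indecomposables in $\U$ per fundamental domain of the $(\nu\circ[1])$-action must be a positive integer, or the analogue of the Nakayama-algebra inequalities used in Section~3 (the conditions (a),(b) of Proposition~\ref{siny}). For type $A$ these translate into the explicit arithmetic handled in Theorem~\ref{nakayama}; for type $E$ one expects the candidate values of $d$ surviving Lemma~\ref{periodicity} to each fail such a secondary integrality or inequality test. I would check each of the finitely many pairs $(h,d)$ with $h\in\{12,18,30\}$ and $d+1$ a proper-or-total divisor of $2(h-1)$, ruling them out one by one.

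The hard part will be supplying the secondary constraint that eliminates the few surviving candidates, since Lemma~\ref{periodicity} alone leaves $d$ with several admissible values for each $E$-type. The obstacle is that, unlike type $A$ where $T(kQ)$ is Nakayama and the explicit formulas of Proposition~\ref{siny} apply, there is no Nakayama structure in types $D,E$; so one cannot invoke Proposition~\ref{siny} directly. I expect the resolution to come either from a finer analysis of how $\nu_d$ permutes the $\tau$-orbits $\mathcal{O}_i$ — tracking which orbit $\nu_d$ sends each $\mathcal{O}_i$ to, and observing that $(\nu\circ[1])$-equivariance forces an incompatible permutation pattern for the $E$-type Coxeter numbers — or from the Calabi--Yau dimension $m/l = (h-2)/h$ (via Proposition~\ref{cydim}) failing to be compatible with the fractional value $d/(d+1)$ that a self-injective $d$-representation-finite algebra would demand. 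Pinning down and verifying this orbit-permutation obstruction for each admissible $(h,d)$ is where the genuine work lies.
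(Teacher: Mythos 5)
Your first step reproduces the paper's divisibility argument exactly: Lemma~\ref{periodicity} gives $(d+1)\mid 2(h-1)$ with $h-1\in\{11,17,29\}$ prime, so $d+1\in\{2,h-1,2(h-1)\}$. But the proposal then stops precisely where the proof begins: you acknowledge that a ``secondary constraint'' is needed to kill the surviving candidates $d+1\in\{h-1,2(h-1)\}$ and only gesture at three possible sources for it (a density count, an orbit-permutation obstruction, a Calabi--Yau compatibility), without carrying any of them out. The missing idea is an \emph{upper bound} $d<h-2$, which together with primality of $h-1$ forces $d+1=2$, i.e.\ $d=1$. The paper gets this bound from Lemma~\ref{isom}\eqref{isom3}, $[2]\simeq(\nu\circ[1])\circ\tau^{-1}$: reading off the mesh category of $\Z Q$, every indecomposable $X$ satisfies $\Hom_{\dm{kQ}}(X,\tau^{-j}X)\ne0$ for some $j\le m-3$ depending only on the $\tau$-orbit of $X$ ($j=2$ for the branch-vertex orbit $\mathcal{O}_4$, $j=3$ for $\mathcal{O}_1$, $j=m-3$ for $\mathcal{O}_m$, $j=1$ otherwise), and hence $\Hom_{\dm{kQ}}\bigl((\nu\circ[1])^{j}(X),X[2j]\bigr)\ne0$. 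So any $(\nu\circ[1])$-equivariant subcategory $\U$ has $\Hom_{\dm{kQ}}(\U,\U[r])\ne0$ for some $r\le 2(m-3)<h-2$, and Ext-orthogonality then forces $d\le r<h-2$. This self-extension-in-low-degree estimate is the entire content beyond Lemma~\ref{periodicity}, and nothing in your proposal supplies it.

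Moreover, the candidate mechanisms you do name would not obviously close the gap. The orbit-permutation idea is the closest in spirit, but note that $\nu\circ[1]\simeq\tau^{1-h}$ (Lemma~\ref{isom}\eqref{isom2}) is a power of $\tau$ and therefore \emph{stabilises every} $\tau$-orbit $\mathcal{O}_i$; likewise, for $E_7$ and $E_8$ one has $[1]\simeq\tau^{-h/2}$ by Proposition~\ref{cydim}, so all the relevant functors act trivially on the set of orbits and no ``incompatible permutation pattern'' can arise there. What is needed is not how the functors permute orbits but the quantitative Hom-nonvanishing within and between orbits described above. Similarly, the Calabi--Yau value $(h-2)/h$ alone cannot distinguish the surviving $d$: the comparison with $d/(d+1)$-type conditions is exactly what Lemma~\ref{periodicity} already encodes, so it yields no new information. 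As written, the proposal is a correct reduction plus an honest to-do list, not a proof.
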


\begin{proof}
Let $Q$ be a quiver of Dynkin type $E_m$, $m\in\{6,7,8\}$, with vertices enumerated as in Figure~\ref{Em}, and arrows oriented in the direction of decreasing index. For $i\in\{2,\ldots,m\}$, let $a_i$ be the unique arrow in $Q$ starting in $i$.
Then $a_{4,l}a_{3,l}a_{3,l}^*a_{4,l+1}^* \in \Hom_{k(\Z Q)}( (4,l), (4,l+2) )$ is non-zero for any $l\in\Z$. 
Using the equivalence between $k(\Z Q)$ and the category of indecomposable objects in $\dm{kQ}$, we now get, for an arbitrary object $X = \tau^{-l}(P_4)\in\mathcal{O}_4$, that
\[
  \Hom_{\dm{kQ}}((\nu\circ[1])^2(X),X[4]) \simeq \Hom_{\dm{kQ}}(X,\tau^{-2}(X)) \simeq
  \Hom_{k(\Z Q)}( (4,l), (4,l+2) ) \ne 0 \,.
\]
Similarly, since the morphisms
\begin{align*}
  a_2^*a_3^*a_4^*a_4a_3a_2 &\in\Hom_{k(\Z Q)}( (1,l), (1,l+3) ), \\
  a_m\cdots a_5a_3a_3^*a_5^*\cdots a_m^* &\in\Hom_{k(\Z Q)}((m,l),(m,l+m-3)),\\
  a_ia_i^* &\in\Hom_{k(\Z Q)}((i,l),(i, l+1)) \quad\text{for $i\ne 1, 4, m$},
\end{align*}
 in $k(\Z Q)$ are non-zero, we get
  \begin{align*}
    &\Hom_{\dm{kQ}}((\nu\circ[1])^3(X),X[6]) \simeq \Hom_{\dm{kQ}}(X,\tau^{-3}(X)) \ne 0, &&\mbox{if}\; X\in\mathcal{O}_1; \\
    &\Hom_{\dm{kQ}}((\nu\circ[1])^{m-3}(X),X[2(m-3)]) \simeq \Hom_{\dm{kQ}}(X,\tau^{3-m}(X)) \ne 0, &&\mbox{if}\; X\in\mathcal{O}_m; \\
    &\Hom_{\dm{kQ}}((\nu\circ[1])(X), X[2]) \simeq \Hom_{\dm{kQ}}(X,\tau^{-1}(X)) \ne 0, &&\mbox{if}\; X\in\mathcal{O}_i,\: i\ne 1,4,m.
  \end{align*}
Thus, for any $(\nu\circ[1])$-equivariant subcategory $\U$ of $\dm{kQ}$, we have $\Hom_{\dm{kQ}}(\U,\U[r])\ne0$ for some $r\in\{1,\ldots, 2(m-3)\}$.
Observe that $2(m-3)<h-2$ for each $m=6,7,8$\forget{$m\in\{6,7,8\}$}, where $h=12,18,30$\forget{$h\in\{12,18,30\}$} is the Coxeter number of $E_m$, so if $\dm{kQ}$ has a $(\nu\circ[1])$-equivariant $d$-cluster-tilting subcategory then $d<h-2$.

On the other hand, by Lemma~\ref{periodicity}, the existence of such a subcategory implies that $(d+1)\mid 2(h-1)$. Since $h-1\in\{11,17,29\}$ is a prime number and $d+1<h-1$, it follows that $d+1=2$ and thus $d=1$.
So $\dm{kQ}$ does not have a $(\nu\circ[1])$-equivariant $d$-cluster-tilting subcategory for any $d\ge2$, and hence $T(kQ)$ is not $d$-representation-finite.

The trivial extension algebra $T(kQ')$, of an arbitrary quiver $Q'$ of Dynkin type $E$, is stably equivalent to $T(kQ)$, and thus not $d$-representation-finite.
\end{proof}

For the remainder of this section, let $Q$ be a quiver of Dynkin type $D_n$, with $n\ge4$ and vertices enumerated as in Figure~\ref{Dn}.

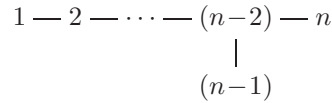
\begin{figure}[htb]
  \centerline{
    \xymatrix@=1em{
      1\ar@{-}[r] & 2\ar@{-}[r] & \cdots\ar@{-}[r] & (n\!-\!2)\ar@{-}[r]\ar@{-}[d] & n \\
      & & & (n\!-\!1)
    }
  }
  \caption{The Dynkin diagram $\mathbb{D}_n$}
  \label{Dn}
\end{figure}

Grimeland \cite{grimeland16} has studied invariance properties of $2$-cluster-tilting subcategories of $\dm{A}$ for representation-finite hereditary algebras $A$. 
The following result is a direct consequence of \cite[Corollary~44]{grimeland16}.

\begin{prop} \label{dnot2}
The category $\dm{kQ}$ does not have a $(\nu\circ[1])$-equivariant $2$-cluster-tilting subcategory; hence, $T(kQ)$ is not $2$-representation-finite.
\end{prop}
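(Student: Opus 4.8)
The plan is to leverage the general classification of invariant $2$-cluster-tilting subcategories obtained by Grimeland, rather than to attempt a direct combinatorial analysis of $\Z D_n$. The key observation is that $T(kQ)$ is $2$-representation-finite if and only if $\dm{kQ}$ admits a $(\nu\circ[1])$-equivariant $2$-cluster-tilting subcategory; this equivalence is already established in the preliminaries via the Happel equivalence $\stmod T(kQ)\simeq\dm{kQ}/(\nu\circ[1])$. So the entire statement reduces to ruling out such an equivariant subcategory in type $D_n$.

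First I would recall, using Lemma~\ref{isom}\eqref{isom2}, that the functor $\nu\circ[1]$ by which we quotient is simply a power of the Auslander--Reiten translation, namely $\tau^{1-h}$ with $h=2(n-1)$. A $2$-cluster-tilting subcategory $\U\subset\dm{kQ}$ satisfies the self-orthogonality condition $\Hom_{\dm{kQ}}(\U,\U[1])=0$ together with the maximality condition expressed in the definition; the content of \cite[Corollary~44]{grimeland16} is precisely a complete description of which of these subcategories are stable under $\tau$-powers (equivalently, under the relevant autoequivalences of $\dm{A}$) for representation-finite hereditary $A$. The main step is therefore to invoke that corollary and read off that, for Dynkin type $D_n$, none of the $2$-cluster-tilting subcategories of $\dm{kQ}$ is invariant under $\nu\circ[1]=\tau^{1-h}$.

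The step I expect to be the main obstacle is making the translation between Grimeland's formalism and the functor $\nu\circ[1]$ explicit enough that the citation genuinely applies. Grimeland's invariance results are phrased in terms of a specific group of autoequivalences acting on $\dm{A}$ (generated by $\tau$, $[1]$, and possibly a quiver symmetry), and one must check that $\nu\circ[1]$ lies in the subgroup for which Corollary~44 asserts non-existence in type $D$. Using the fractional Calabi--Yau relation of Proposition~\ref{cydim} and the isomorphisms of Lemma~\ref{isom}, one sees that $\nu\circ[1]\simeq\tau^{1-h}$ is an integral power of $\tau$, so it is covered by Grimeland's classification; the only care needed is to confirm that the parameters match the type-$D_n$ case of her result and that $D_4$ is not accidentally excepted.

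Once the correspondence is set up, the conclusion is immediate: since no $(\nu\circ[1])$-equivariant $2$-cluster-tilting subcategory of $\dm{kQ}$ exists, the corresponding $2$-cluster-tilting module over $T(kQ)$ cannot exist either, and hence $T(kQ)$ fails to be $2$-representation-finite for every $D_n$ with $n\ge4$. This is consistent with Theorem~\ref{ade}, where the only type-$D$ entry is $D_4$ with $d=4$, confirming that $d=2$ must be excluded in type $D$.
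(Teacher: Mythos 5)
Your proposal matches the paper's approach exactly: the paper also derives this proposition as a direct consequence of \cite[Corollary~44]{grimeland16}, offering no further argument beyond the citation. Your additional remarks on identifying $\nu\circ[1]\simeq\tau^{1-h}$ as a power of $\tau$ (via Lemma~\ref{isom}) so that Grimeland's classification applies are a reasonable fleshing-out of the same route, not a different one.
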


Now, assume that $\dm{kQ}$ has a $(\nu\circ[1])$-equivariant $d$-cluster-tilting subcategory $\U$ for some $d\ge3$.
Since the Coxeter number of $D_n$ is $h=2n-2$, Lemma~\ref{periodicity} tells us that
\begin{equation} \label{Dndiv}
  (d+1)\mid 2(2n-3).
\end{equation}
This immediately implies that $d\ne3$, since $2(2n-3)=4n-6$ is not divisible by $4$. 

\begin{lma} \label{ontherim}
  The $d$-cluster-tilting subcategory $\U$ of $\dm{kQ}$ is contained in $\add\left(\mathcal{O}_1\cup\mathcal{O}_{n-1}\cup\mathcal{O}_n\right)$, but not in $\mathcal{O}_1$. 
\end{lma}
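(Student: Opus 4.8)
The plan is to eliminate, one at a time, the $\tau$-orbits of the non-leaf vertices of $\mathbb{D}_n$, and then to argue separately that a single leaf-orbit is too thin to be cluster-tilting.

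I would first reduce the containment to a self-$\Hom$ computation. By Lemma~\ref{isom}\eqref{isom3} we have $X[2]\simeq(\nu\circ[1])\tau^{-1}(X)$, so for every $X\in\dm{kQ}$, and using that $\nu\circ[1]$ is an auto-equivalence, there is an isomorphism $\Hom_{\dm{kQ}}((\nu\circ[1])(X),X[2])\cong\Hom_{\dm{kQ}}(X,\tau^{-1}(X))$. If $X\in\U$ then $(\nu\circ[1])(X)\in\U$ by $(\nu\circ[1])$-equivariance, so a non-zero group on the right produces a non-zero $\Ext^2_{\dm{kQ}}(\U,\U)$; since $d\ge3$ we have $2\in\{1,\dots,d-1\}$, contradicting the cluster-tilting axiom. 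Hence it suffices to show that $\Hom_{\dm{kQ}}(X,\tau^{-1}(X))\neq0$ for every indecomposable $X$ lying in an orbit $\mathcal{O}_i$ with $2\le i\le n-2$. This group depends only on the orbit, since $\tau$ is an auto-equivalence.

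I would then evaluate this self-$\Hom$ from the mesh structure of $\Z Q$. Replacing $X$ by $\tau(X)$ it equals $\Hom(\tau X,X)$. Applying the contravariant functor $\Hom(-,X)$ to the Auslander--Reiten triangle $\tau X\to E\to X\to\tau X[1]$ and using that every indecomposable over a Dynkin path algebra is exceptional, so $\Hom(X,X[1])=\Ext^1(X,X)=0$, yields a short exact sequence $0\to\Hom(X,X)\to\Hom(E,X)\to\Hom(\tau X,X)\to0$, whence $\dim_k\Hom(X,\tau^{-1}(X))=\dim_k\Hom(E,X)-1$. The middle term $E$ is the sum of the indecomposables adjacent to $X$ in $\Z Q$, one for each neighbour of $i$ in $Q$, each contributing a one-dimensional space of maps into $X$; so $\dim_k\Hom(X,\tau^{-1}(X))=\deg_Q(i)-1$. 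This vanishes precisely when $i$ is a leaf. The leaves of $\mathbb{D}_n$ are exactly $1$, $n-1$ and $n$, while every vertex $i$ with $2\le i\le n-2$ has degree at least $2$ (the branch vertex $n-2$ having degree $3$). This gives $\U\subseteq\add(\mathcal{O}_1\cup\mathcal{O}_{n-1}\cup\mathcal{O}_n)$.

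For the final assertion, $\U\not\subseteq\mathcal{O}_1$, I would argue by the maximality built into the definition of a $d$-cluster-tilting subcategory. Suppose $\U\subseteq\add\mathcal{O}_1$. Since $\U=\{X\in\dm{kQ}:\Ext^i_{\dm{kQ}}(\U,X)=0,\ i=1,\dots,d-1\}$, it is enough to exhibit one object outside $\mathcal{O}_1$ that is $\Ext$-orthogonal to $\U$ in degrees $1,\dots,d-1$: such an object would be forced into $\U$, a contradiction. The natural candidate is a suitable $\tau$-translate of $P_{n-1}$, and the required vanishing of $\Hom_{\dm{kQ}}(U,X[i])$ for all $U\in\U\subseteq\mathcal{O}_1$ must be read off from the shape of $\Z Q$, where the long arm (the orbit $\mathcal{O}_1$) and a short arm (the orbit $\mathcal{O}_{n-1}$) are sufficiently separated that no morphisms occur in the relevant degrees. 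I expect this cross-orbit $\Hom$-vanishing in $\Z Q$, rather than the self-$\Hom$ formula of the previous step, to be the main obstacle: it requires controlling the interaction of two distinct orbits across the whole range $1,\dots,d-1$ of degrees, and here the periodicity $[2]\simeq\tau^{-h}$ together with the bound $(d+1)\mid 2(2n-3)$ is needed to pin down how far around the orbits that range reaches.
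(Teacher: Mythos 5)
Your first assertion is proved correctly, and in fact more explicitly than in the paper: the reduction via $[2]\simeq(\nu\circ[1])\circ\tau^{-1}$ to showing $\Hom_{\dm{kQ}}(X,\tau^{-1}(X))\ne0$ for non-rim orbits is exactly the paper's reduction, and your Auslander--Reiten-triangle computation $\dim_k\Hom_{\dm{kQ}}(X,\tau^{-1}(X))=\deg_Q(i)-1$ (valid since indecomposables over Dynkin quivers are bricks with no self-extensions, and since all paths in $\Z Q$ between two fixed vertices have equal length, so each middle-term summand contributes exactly a one-dimensional $\Hom$) is a clean quantitative substitute for the paper's ``one reads off from the shape of $\Z Q$''.

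The second assertion, however, is where your proposal has a genuine gap, and you flag it yourself: the ``cross-orbit $\Hom$-vanishing'' is precisely the content of the claim, and it is left as an expectation rather than proved. Moreover, the route you sketch -- choosing a $\tau$-translate of $P_{n-1}$ ``sufficiently separated'' from $\mathcal{O}_1$ so that no morphisms occur in degrees $1,\ldots,d-1$, with the periodicity $[2]\simeq\tau^{-h}$ and $(d+1)\mid 2(2n-3)$ controlling the reach -- is unlikely to close it as stated: $\U$ is infinite and $(\nu\circ[1])$-periodic, so your candidate must be orthogonal to a family of objects spread periodically around $\Z Q$ simultaneously in all degrees $1,\ldots,d-1$, and nonzero morphisms between $\mathcal{O}_1$ and the short-arm orbits certainly exist, so no translate is ``separated'' from all of $\U$. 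The paper avoids this global counting entirely by a pointwise factorization trick: fix an indecomposable $X\in\U\subset\mathcal{O}_1$ and let $\tilde{X}$ be the \emph{unique} indecomposable in $\mathcal{O}_n$ with $\Hom_{\dm{kQ}}(X,\tilde{X})\ne0$; the shape of $\Z Q$ shows that every morphism from $X$ to any indecomposable $Y\in\mathcal{O}_1$ with $Y\not\simeq X$ factors through a nonzero $f\colon X\to\tilde{X}$, giving $\Hom_{\dm{kQ}}(\tilde{X},Y)\simeq\Hom_{\dm{kQ}}(X,Y)$. Since $\mathcal{O}_1[1]=\mathcal{O}_1$ and $\Hom_{\dm{kQ}}(X,Y[r])=0$ forces $Y[r]\not\simeq X$, one gets $\Hom_{\dm{kQ}}(\tilde{X},Y[r])=0$ for all $Y\in\U$ and $r\in\{1,\ldots,d-1\}$, so the maximality characterisation of $d$-cluster-tilting subcategories puts $\tilde{X}\in\U$, contradicting $\U\subset\mathcal{O}_1$. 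Note that this argument uses no divisibility condition and no control of ``how far the range reaches'': the orthogonality of $\tilde{X}$ is inherited from that of $X$ object by object. Your proposal is missing exactly this idea -- transferring orthogonality along a factorization rather than establishing it by separation -- and without it the second assertion remains unproved.
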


\begin{proof}
  Let $X\in\U$ be indecomposable. Since $\U$ is $(\nu\circ[1])$-equivariant and $d\ge4$, by Lemma~\ref{isom}\eqref{isom3} we get that 
  \[\Hom_{\dm{kQ}}(X,\tau^{-1}X)\simeq \Hom_{\dm{kQ}}((\nu\circ[1])(X),X[2]) = 0. \]
If $l\in \Z$ and $1< i < n-1$ then either $a_ia_i^*$ or $a_i^*a_i$ (depending on the orientation of $Q$) is a non-zero morphism $(i,l)\to (i,l+1)$ in $k(\Z Q)$. Thus $X\in\mathcal{O}_1\cup\mathcal{O}_{n-1}\cup\mathcal{O}_n$.

Assume now that $\U\subset\mathcal{O}_1$, and hence that $X\in\mathcal{O}_1$. 
From the shape of $\Z Q$, one reads off that there exists a unique (up to isomorphism) indecomposable $\widetilde{X}\in \mathcal{O}_n$ such that $\Hom_{\dm{kQ}}(X,\widetilde{X})\ne 0$. Moreover, if $f:X\to \widetilde{X}$ is a non-zero morphism and $Y\in\mathcal{O}_1$ an indecomposable object not isomorphic to $X$, then every morphism $X\to Y$ factors uniquely through $f$. Hence, $f$ induces an isomorphism $f^*:\Hom_{\dm{kQ}}(\widetilde{X},Y)\stackrel{\sim}{\to} \Hom_{\dm{kQ}}(X,Y),\: g\mapsto gf$.

Note that $\mathcal{O}_1[1] = \mathcal{O}_1$. Now, for any indecomposable $Y\in\U\subset\mathcal{O}_1$ and any $r\in\{1,\ldots,d-1\}$, we have $\Hom_{\dm{kQ}}(X,Y[r])=0$, implying that $Y[r]\not\simeq X$, and thus \[\Hom_{\dm{kQ}}(\widetilde{X},Y[r])\simeq \Hom_{\dm{kQ}}(X,Y[r])=0.\]
This proves that $\widetilde{X} \in\{Z\in\dm{kQ}\mid \Ext^i(Z,\U)=0,\:\forall i\in\{1,\ldots,d-1\}\}=\U$; hence, $\U\subset\mathcal{O}_1$ is impossible.
\end{proof}

With the following proposition, we conclude the proof of the necessity part of Theorem~\ref{ade}.

\begin{prop}
  The algebra $T(kQ)$ is $d$-representation-finite only if $d=n=4$. 
\end{prop}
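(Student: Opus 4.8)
The plan is to combine the divisibility constraint from Lemma~\ref{periodicity} with the orbit restriction from Lemma~\ref{ontherim} to force $n=4$. We already know from \eqref{Dndiv} that $(d+1)\mid 2(2n-3)$ and that $d\ge4$ with $d\ne3$. First I would try to pin down the possible values of $d$ more tightly by exploiting the geometry of $\Z Q$ for type $D_n$. The key observation will be that $\U$ meets the orbits $\mathcal{O}_{n-1}$ and $\mathcal{O}_n$ (the two ``fork'' orbits at the branching end), and that these orbits have special symmetry properties under $\tau$ and under $\nu\circ[1]\simeq\tau^{1-h}$. Since the Coxeter number is $h=2n-2$, the functor $(\nu\circ[1])$ acts as $\tau^{3-2n}$, and I would compute how this permutes the three orbits $\mathcal{O}_1,\mathcal{O}_{n-1},\mathcal{O}_n$.

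The main idea I would pursue is a positivity/non-vanishing argument analogous to the one used for type $E$. For an indecomposable $X$ in one of the admissible orbits, I would locate the smallest positive power $\tau^{-s}$ for which $\Hom_{\dm{kQ}}(X,\tau^{-s}X)\ne0$, reading this off from the mesh structure of $\Z D_n$. Using $[2]\simeq(\nu\circ[1])\circ\tau^{-1}$ from Lemma~\ref{isom}\eqref{isom3}, such a non-vanishing Hom translates into $\Hom_{\dm{kQ}}(\U,\U[r])\ne0$ for some explicit $r$ depending on $n$; for $\U$ to be $d$-cluster-tilting this forces $r\ge d$, giving an \emph{upper} bound on $d$ in terms of $n$. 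Combining this upper bound with the divisibility lower constraint $(d+1)\mid 2(2n-3)$ should leave only finitely many candidate pairs, and I expect the analysis to collapse to $d=n=4$ once the arithmetic is carried out, possibly after separately ruling out the vertex-orbit where $\U$ is \emph{not} contained (recall $\U\not\subset\mathcal{O}_1$).

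The hard part will be getting the exact Hom-non-vanishing thresholds in the three relevant orbits of $\Z D_n$ correct, since the branching at the end of the $D_n$ diagram makes the mesh combinatorics more delicate than in type $A$ or even type $E$: the orbits $\mathcal{O}_{n-1}$ and $\mathcal{O}_n$ interact with each other and with $\mathcal{O}_1$ in a way that depends on the parity of $n$ (reflecting the fact that $\tau$ either fixes or swaps the two fork vertices depending on $n$ being even or odd). I would therefore expect to treat the self-extension bound by carefully tracking a specific indecomposable — likely one supported near the branch point — and computing its $\tau$-period against the other orbits. Once the sharp bound $d\le f(n)$ is established, intersecting it with $(d+1)\mid 2(2n-3)$ and the constraint $d\ge4$ should, after a short case check, yield exactly $d=n=4$, matching case (c) of Theorem~\ref{ade}.
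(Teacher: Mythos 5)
Your first step is sound and coincides with the paper's: taking an indecomposable $X\in\U$ in a fork orbit $\mathcal{O}_{n-1}$ or $\mathcal{O}_n$ (which exists by Lemma~\ref{ontherim}), the smallest non-vanishing threshold is $\Hom_{\dm{kQ}}(X,\tau^{-2}X)\ne0$, and via $[4]\simeq\tau^{-2}\circ(\nu\circ[1])^2$ this gives $\Hom_{\dm{kQ}}(\U,\U[4])\ne0$, hence $d\le4$; combined with Proposition~\ref{dnot2} and \eqref{Dndiv} (which rules out $d=3$), you get $d=4$. The gap is in your second step. The upper bound you obtain is a \emph{constant} ($d\le4$), independent of $n$, so intersecting it with the divisibility constraint does not ``leave only finitely many candidate pairs'': with $d=4$, the condition $(d+1)\mid 2(2n-3)$ reads $5\mid 2(2n-3)$, i.e.\ $n\equiv4\pmod5$, which admits infinitely many values $n=4,9,14,\ldots$. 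No amount of arithmetic on these two constraints collapses to $n=4$; non-vanishing thresholds can only bound $d$ from above, never bound $n$, and the $(\nu\circ[1])$-equivariance alone is of no further help, since the even-shift vanishing conditions it produces translate into statements $\Hom(\tau^{m}X,X)=0$ for exponents $m$ built from $h=2n-2$ that hold automatically for all $n$.

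The missing idea is the second equivariance that every $d$-cluster-tilting subcategory enjoys, namely $\nu_d(\U)=\U$ from Proposition~\ref{nud}, which your proposal never invokes. The paper combines it with the $(\nu\circ[1])$-equivariance and an \emph{odd}-shift Ext-vanishing: from $(\nu\circ[1])^3\circ\nu_4\circ[-3]\simeq\nu^4\circ[-4]\simeq\tau^4$ one gets
\[
0=\Hom_{\dm{kQ}}\bigl(\bigl((\nu\circ[1])^3\circ\nu_4\bigr)(X),X[3]\bigr)\simeq\Hom_{\dm{kQ}}(\tau^4(X),X),
\]
where the left-hand side vanishes because $3\in\{1,\ldots,d-1\}$ and $\bigl((\nu\circ[1])^3\circ\nu_4\bigr)(X)\in\U$. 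Since for $X$ in a fork orbit one has $\Hom_{\dm{kQ}}(\tau^{2r}(X),X)\ne0$ exactly when $0\le2r\le n-2$, this vanishing forces $4>n-2$, i.e.\ $n<6$, and only then does $n\equiv4\pmod5$ pin down $n=4$. Your parity worries about the fork swap are legitimate but tangential: the paper sidesteps them by working only with even powers of $\tau$, which preserve each fork orbit regardless of the parity of $n$.
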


\begin{proof}
  As before, $\U$ denotes a $(\nu\circ[1])$-equivariant $d$-cluster-tilting subcategory of $\dm{kQ}$. By Lemma~\ref{ontherim}, there exists some indecomposable object $X\in\U$ that is contained in either $\mathcal{O}_{n-1}$ or $\mathcal{O}_n$.
As $[4]\simeq \tau^{-2}\circ(\nu\circ[1])^2$ by Lemma~\ref{isom}\eqref{isom3}, we get
  \[0\ne\Hom_{\dm{kQ}}(X,\tau^{-2}(X)) \simeq \Hom_{\dm{kQ}}((\nu\circ[1])^2(X),X[4])\]
  and, since $X$ and $(\nu\circ[1])^2(X)$ both belong to $\U$, this implies that $d\le4$. On the other hand, from Proposition~\ref{dnot2} and \eqref{Dndiv} we know that $d>3$, hence $d=4$.

Note that $\Hom_{\dm{kQ}}(\tau^{2r}(X),X)\ne0$ if and only if $0\le 2r\le n-2$. 
From the definitions, we get the following isomorphisms of triangle functors on $\dm{kQ}$:
 \[(\nu\circ[1])^3\circ\nu_4\circ[-3] \simeq \nu^4\circ[3-4-3]\simeq \tau^4.\]
As $\U$ is $4$-cluster-tilting, it is $\nu_4$-equivariant by Proposition~\ref{nud}, and it follows that
\[
  0 = \Hom_{\dm{kQ}}\left(\left(\nu\circ[1])^3\circ\nu_4\right)(X),X[3]\right) \simeq \Hom_{\dm{kQ}}\left(\tau^4(X),X\right)
\]
implying that $4 > n-2$, that is, $n < 6$.
On the other hand, the condition \eqref{Dndiv} for $d=4$ becomes $5\mid2(2n-3)$ or, equivalently, $5\mid (n+1)$. Hence, $n=4$.
\end{proof}

To conclude the proof of Theorem~\ref{ade}, it remains only to show that $T(kQ)$ is $4$-representation-finite in case $Q$ is of Dynkin type $D_4$.
To this end, consider the quiver $Q$ given by Figure~\ref{D4}, and let
\[\U=\add\{\tau^{5l}(P_1\oplus P_4) \mid l\in\Z\} \subset \dm{kQ}. \]
\begin{figure}[htb]
  \centerline{
  \xymatrix@=1.2em{
      1\ar[r] & 2\ar[r]\ar[d] &4 \\
      &  3
   }
  }
  \caption{Quiver $Q$ of Dynkin type $D_4$.}
  \label{D4}
\end{figure}
By Lemma~\ref{isom}\eqref{isom2}, $\nu\circ[1]\simeq \tau^{1-h} = \tau^{-5}$, so $\U$ is $(\nu\circ[1])$-equivariant.
Moreover, by Proposition~\ref{cydim}, $kQ$ is $(2/3)$-Calabi--Yau, and hence $[1]\simeq \tau^{-3}$ on $\dm{kQ}$.
It is now straightforward to verify that $\U$ is a $4$-cluster-tilting subcategory of $\dm{kQ}$, thus concluding the proof of Theorem~\ref{ade}.

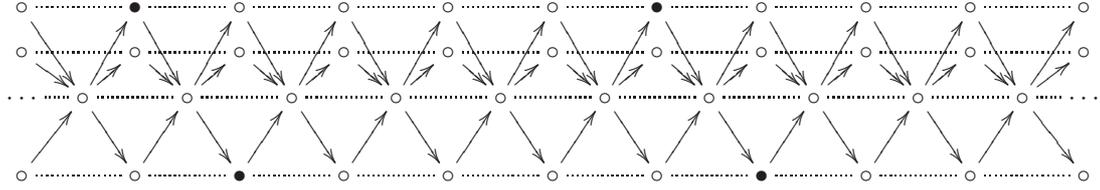
\begin{figure}[h]
 \[
  \xymatrix@R=.23cm@C=.3cm{
\circ\ar[ddr]\ar@{.}[rr] & & \bullet\ar[ddr]\ar@{.}[rr] & & \circ\ar[ddr]\ar@{.}[rr] & & \circ\ar[ddr]\ar@{.}[rr] & & \circ\ar[ddr]\ar@{.}[rr] & & \circ\ar[ddr]\ar@{.}[rr] & & \bullet\ar[ddr]\ar@{.}[rr] & & \circ\ar[ddr]\ar@{.}[rr] & & \circ\ar@{.}[rr]\ar[ddr]  & & \circ\ar@{.}[rr]\ar[ddr]  & & \circ\\
    \circ\ar[dr]\ar@{.}[rr] & & \circ\ar[dr]\ar@{.}[rr] & & \circ\ar[dr]\ar@{.}[rr] & & \circ\ar[dr]\ar@{.}[rr] & & \circ\ar[dr]\ar@{.}[rr] & & \circ\ar[dr]\ar@{.}[rr] & & \circ\ar[dr]\ar@{.}[rr] & & \circ\ar[dr]\ar@{.}[rr] & & \circ\ar@{.}[rr]\ar[dr]  & & \circ\ar@{.}[rr]\ar[dr]  & & \circ\\
    \cdots\ar@{.}[r]&\circ\ar[ddr]\ar@{.}[rr]\ar[uur]\ar[ur] & & \circ\ar[ddr]\ar@{.}[rr]\ar[uur]\ar[ur] & & \circ\ar[ddr]\ar@{.}[rr]\ar[uur]\ar[ur] & & \circ\ar[ddr]\ar@{.}[rr]\ar[uur]\ar[ur] & & \circ\ar[ddr]\ar@{.}[rr]\ar[uur]\ar[ur] & & \circ\ar[ddr]\ar@{.}[rr]\ar[uur]\ar[ur] & & \circ\ar[ddr]\ar@{.}[rr]\ar[uur]\ar[ur] & & \circ\ar[ddr]\ar[uur]\ar[ur]\ar@{.}[rr] & & \circ\ar[uur]\ar[ur]\ar[ddr]\ar@{.}[rr] & & \circ\ar[uur]\ar[ur]\ar[ddr]\ar@{.}[r]&\cdots\\
    \\
    \circ\ar[uur]\ar@{.}[rr] & & \circ\ar[uur]\ar@{.}[rr] & & \bullet\ar[uur]\ar@{.}[rr] & & \circ\ar[uur]\ar@{.}[rr] & & \circ\ar[uur]\ar@{.}[rr] & & \circ\ar[uur]\ar@{.}[rr] & & \circ\ar[uur]\ar@{.}[rr] & & \bullet\ar[uur]\ar@{.}[rr] & & \circ\ar@{.}[rr]\ar[ruu] & & \circ\ar@{.}[rr]\ar[ruu] & & \circ
  }
  \]
  \caption{The Auslander--Reiten quiver of $\dm{kQ}$ for $Q$ of type $D_4$, with the $4$-cluster-tilting subcategory $\U$ indicated in black.}
\end{figure}

\section{Proof of Corollary~\ref{rfsymm}}

Let $d$, $k$ and $A$ be as in Corollary~\ref{rfsymm}. 
From the classification of representation-finite symmetric $k$-algebras (\cite{gr79,riedtmann80a,riedtmann80b,riedtmann83}, \cite{hw83,waschbusch81,waschbusch83}; see also \cite{skowronski06}), it follows that (at least) one of the following conditions holds.
\begin{enumerate}
\item $A$ is stably equivalent to a symmetric Nakayama algebra;
\item $A$ is a trivial extension of a tilted algebra of Dynkin type $D$ or $E$; or
\item $A$ has tree class $D_{3m}$ for some $m\ge2$. 
\end{enumerate}

In the first case, Theorem~\ref{nakayama} implies that $A$ is of the type given in Corollary~\ref{rfsymm}\eqref{rfsymm1}.
In the second case, $A$ is stably equivalent to $T(kQ)$, where $Q$ is the corresponding Dynkin quiver, whence it follows from Theorem~\ref{ade} that $Q$ is of type $D_4$ and $d=4$.

If $A$ has tree class $D_{3m}$ then it is socle equivalent (see \cite[Sec.~2.4]{skowronski06}) to an algebra of the form $A' = k\widehat{B}/\phi$, where $B$ is tilted of type $D_{3m}$ and $\phi\in\Aut_k(\widehat{B})$ is an admissible automorphism such that $\phi^3 = \nu_{\widehat{B}}$ in the outer automorphism group \cite[Sec~3.5-3.6]{skowronski06}. Thus $\dm{B}\simeq\dm{kQ}$ for $Q$ of type $D_{3m}$, and $(\phi^*)^3 \simeq \nu\circ[1]$.

Let $\ind A$ and $\ind A'$ be sets of representatives for the isomorphism classes of indecomposable modules of $A$, respectively $A'$. 
As the algebras $A$ and $A'$ are socle equivalent, they have isomorphic Auslander--Reiten quivers.
This isomorphism induces a bijection $\Phi:\ind A \to \ind A',\: X\mapsto X'$.
Let $X$ and $Y$ be indecomposable, non-projective $A$-modules, and $i$ a positive integer. Then $\Ext^i_A(X,Y)\simeq\sthom_A(\Omega^i(X),Y)$ is non-zero if and only if there exists a non-zero path from $\Omega^i(X)$ to $Y$ in the mesh category of the stable Auslander--Reiten quiver of $A$, if and only if $\Ext^i_{A'}(\Phi(X),\Phi(Y))\simeq\sthom_{A'}(\Phi(X),\Phi(Y))$ is non-zero.
In particular, if $\U$ is a $d$-cluster-tilting subcategory of $\mod A$, then $\add\Phi(\U\cap\ind A)$ is a $d$-cluster-tilting subcategory of $\mod A'$. But then $\dm{B}\simeq\dm{kQ}$ contains a $\phi^*$-equivariant, and thus $(\nu\circ[1])$-equivariant, $d$-cluster-tilting subcategory. This is impossible by Theorem~\ref{ade}. It follows that $A$ is not $d$-representation-finite.

\section{Examples of $d$-cluster-tilting modules}

Here, we give examples of $d$-cluster-tilting modules for $T(kQ)$ in each of the cases listed in Theorem~\ref{ade}.
The proofs of the results in this section are straightforward verifications.

For $n\ge2$, let $Q_n$ be the following quiver:
\[
Q_n :\quad
\xymatrix@=1.2em{
  1\ar[r] & 2\ar[r] &\cdots\ar[r]& n
}
\]
We denote by $S_i$ the simple $kQ$-module supported at the vertex $i$, and by $P_i$ its projective cover.

\begin{prop} \label{CTA}
  \begin{enumerate}
  \item
    The module $P_n\oplus T(kQ_n)$ is a $(2n-1)$-cluster-tilting module of $T(kQ_n)$.
  \item
    The module $P_1\oplus P_2\oplus S_2\oplus T(kQ_3)$ is a $2$-cluster-tilting module of $T(kQ_3)$.
  \item
    The module\;
    $\displaystyle T(kQ_6) \oplus \bigoplus_{i=0}^3 \Omega_{T(kQ_6)}^{3i}(P_5\oplus P_6)$\;
    is a $2$-cluster-tilting module of $T(kQ_6)$.
  \end{enumerate}
\end{prop}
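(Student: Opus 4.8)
The plan is to verify each module directly in the stable module category of $\Lambda=T(kQ_n)$, using that $\Lambda$ is a symmetric Nakayama algebra with $n$ simple modules and Loewy length $n+1$. First I would record the combinatorial skeleton: every indecomposable $\Lambda$-module is uniserial and determined by its top $S_i$ and its length $\ell\in\{1,\dots,n+1\}$; write $M_{i,\ell}$ for this module, so that the indecomposable projective-injectives are the $M_{i,n+1}$ and the summands in the statement become explicit uniserials (in general $P_i=M_{i,\,n-i+1}$, so that $P_n=S_n=M_{n,1}$, $P_1=M_{1,n}$, and, for the relevant $n$, $P_2=M_{2,n-1}$, $S_2=M_{2,1}$, $P_5=M_{5,2}$, $P_6=M_{6,1}$). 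The single computational tool needed is the syzygy formula $\Omega(M_{i,\ell})=M_{i+\ell,\,n+1-\ell}$ (indices modulo $n$), which gives $\tau=\Omega^2\colon M_{i,\ell}\mapsto M_{i+1,\ell}$ and the periodicity $\Omega^{2n}=\mathrm{id}$. Since $\Lambda$ is self-injective, $\stmod\Lambda$ is triangulated with $[1]=\Omega^{-1}$ and Serre functor $\Omega$, so $\Ext^j_\Lambda(X,Y)\cong\sthom(X,\Omega^{-j}Y)$ for $j\ge1$; between uniserials these stable Hom-spaces are at most one-dimensional and vanish or not according to an explicit overlap condition on the composition-factor intervals (taking due care of maps that factor through a projective).

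Second, I would reduce ``$M$ is $d$-cluster-tilting'' to three conditions on the additive subcategory $\mathcal M\subseteq\stmod\Lambda$ generated by the non-projective summands of $M$. Since $\Lambda\in\add M$, it suffices that $\mathcal M$ be a $d$-cluster-tilting subcategory of $\stmod\Lambda$; and since $\Omega$ is a Serre functor with $\mathbb S_d=\Omega^{d+1}$, for any $\Omega^{d+1}$-stable subcategory the two orthogonality conditions in the definition coincide by Serre duality (via $\mathbb S\simeq\mathbb S_d\circ[d]$), this stability being in any case necessary by Proposition~\ref{nud}. Hence I must check only: (0) $\Omega^{d+1}(\mathcal M)=\mathcal M$; (i) rigidity, $\Ext^j(\mathcal M,\mathcal M)=0$ for $1\le j\le d-1$; and (ii) maximality, that every indecomposable $Z$ with $\Ext^j(\mathcal M,Z)=0$ for all $1\le j\le d-1$ lies in $\mathcal M$. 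Condition (0) is immediate from the $\Omega$-formula: in case~(a), $\mathcal M=\{S_n\}$ and $\Omega^{d+1}=\Omega^{2n}=\mathrm{id}$; in cases~(b) and~(c) we have $d+1=3$, and the listed summands form exactly one $\Omega^3$-orbit, which $\Omega^3$ therefore permutes. Rigidity~(i) is then a finite calculation: compute each $\Omega^{-j}Y$ from the formula and read off that the relevant stable Homs vanish across the range $1\le j\le d-1$.

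The main obstacle is the maximality condition~(ii), as it quantifies over all indecomposables simultaneously. For an arbitrary uniserial $Z=M_{p,m}\notin\mathcal M$ I would compute, via the overlap rule, the set of degrees $j\in\{1,\dots,d-1\}$ at which $\Ext^j(\mathcal M,Z)$ is nonzero, and show this set is nonempty. In case~(a) the range $1\le j\le 2n-2$ is almost the full period $2n$, so $(d-1)$-orthogonality to $S_n$ is extremely restrictive and one checks that $S_n$ is the only survivor. In cases~(b) and~(c), where $d=2$, the range is the single degree $j=1$, so one instead rules out each of the finitely many remaining non-projective indecomposables individually by exhibiting a summand with nonzero $\Ext^1$. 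The periodicity $\Omega^{2n}=\mathrm{id}$ keeps all of these checks finite. An equivalent route, in line with the rest of the paper, is to transport the problem through Happel's equivalence to $\dm{kQ_n}$, the mesh category of $\mathbb Z A_n$, where $M$ corresponds to a $(\nu\circ[1])=\tau^{-n}$-equivariant subcategory and conditions~(i)--(ii) become positional (hammock) statements; the completeness direction remains the crux there as well.
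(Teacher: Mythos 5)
Your verification plan is correct and is essentially the proof the paper intends: the paper gives no written argument beyond declaring these results ``straightforward verifications'' and displaying the subcategories in the stable Auslander--Reiten quiver, and your scheme --- reduce to $\stmod T(kQ_n)$, use the syzygy formula $\Omega(M_{i,\ell})=M_{i+\ell,\,n+1-\ell}$ (whence $\Omega^{2n}=\mathrm{id}$), collapse the two orthogonality conditions via Serre duality once $\Omega^{d+1}$-stability is checked, then do the finite rigidity and maximality computations (e.g.\ in case (a) the socle conditions on $\Omega^{-j}(M_{p,m})$ for $1\le j\le 2n-2$ force $p=n$, $m=1$) --- carries this out soundly. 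One small factual slip: in cases (b) and (c) the nonprojective summands form a union of \emph{two} $\Omega^3$-orbits rather than a single one (in (b), $\Omega^3$ swaps $P_1$ and $S_2$ while $\Omega^3(P_2)\simeq P_2$; in (c), the $\Omega^3$-orbits of $P_5$ and $P_6$ are disjoint of size four each), but the $\Omega^3$-stability your argument actually uses still holds, so nothing breaks.
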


Let $Q$ be the quiver of type $D_4$ given in Figure~\ref{D4}.

\begin{prop}\label{CTD}
  The module $P_1\oplus P_3 \oplus T(kQ)$ is a $4$-cluster-tilting module of $T(kQ)$. 
\end{prop}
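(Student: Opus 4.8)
The plan is to establish that $M = P_1 \oplus P_3 \oplus T(kQ)$ is a $4$-cluster-tilting module of $T(kQ)$ by transporting the problem into the derived category $\dm{kQ}$, where the combinatorics of the Auslander--Reiten quiver $\Z Q$ are fully explicit. By the equivalence $\stmod T(kQ) \simeq \dm{kQ}/(\nu\circ[1])$ discussed in the preliminaries, basic $d$-cluster-tilting modules of $T(kQ)$ correspond to $(\nu\circ[1])$-equivariant $d$-cluster-tilting subcategories of $\dm{kQ}$. The summands $P_1, P_3$ together with the indecomposable projective-injective summands of $T(kQ)$ lift to a set of indecomposable objects in $\dm{kQ}$, and taking the $(\nu\circ[1])$-closure produces precisely the subcategory $\U = \add\{\tau^{5l}(P_1\oplus P_4)\mid l\in\Z\}$ already exhibited at the end of Section~3 (after matching the vertex labelling of the two $D_4$-quivers). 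Thus the proposition reduces to re-verifying that this same $\U$ is $4$-cluster-tilting, which the text asserts is ``straightforward''.

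First I would fix the vertex enumeration of the $D_4$-quiver in Figure~\ref{D4} and list the twelve indecomposable objects of $\dm{kQ}$ lying in a single fundamental domain for the $(\nu\circ[1])$-action, using that $\nu\circ[1]\simeq\tau^{-5}$ (Lemma~\ref{isom}\eqref{isom2}, with $h=6$) and $[1]\simeq\tau^{-3}$ (from $kQ$ being $(2/3)$-Calabi--Yau, Proposition~\ref{cydim}). Next I would identify which of these correspond, under Happel's equivalence, to the summands $P_1, P_3, T(kQ)$; this is a bookkeeping step matching projective $kQ$-modules and their $\tau$-translates to the black vertices in the final Auslander--Reiten-quiver figure. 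Then I would carry out the two defining vanishing checks for $\U$: that $\Ext^i(\U,\U) = \Hom_{\dm{kQ}}(\U,\U[i]) = 0$ for $i\in\{1,2,3\}$, and that any object $Z$ with $\Ext^i(Z,\U)=0$ (equivalently $\Ext^i(\U,Z)=0$) for all such $i$ already lies in $\U$. Because $[i]\simeq\tau^{-3i}$, each of these is a purely combinatorial statement about the existence of arrows/paths in the mesh category of $\Z D_4$ between the chosen $\tau$-orbits, readable directly off the displayed Auslander--Reiten quiver.

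The self-orthogonality $\Hom_{\dm{kQ}}(\U,\U[i])=0$ for $i=1,2,3$ amounts to checking that no nonzero morphisms run between $\tau^{5l}(P_1\oplus P_4)$ and $\tau^{5l'-3i}(P_1\oplus P_4)$ for the relevant shifts; since the Hom-spaces in the mesh category are governed by the positivity regions of $\Z D_4$ and the orbit of $P_1\oplus P_4$ is spaced by multiples of $5$ under $\tau$, the dangerous shifts $3,6,9$ modulo the period must be shown to land outside the support of any Hom. The maximality (the reverse inclusion in the definition of $d$-cluster-tilting subcategory) is the converse: for each of the twelve indecomposables in the fundamental domain \emph{not} in $\U$, I must exhibit at least one $i\in\{1,2,3\}$ and one $U\in\U$ with $\Ext^i(Z,U)\neq0$. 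Finally, functorial finiteness is automatic here since $\dm{kQ}$ has finitely many indecomposables up to the $(\nu\circ[1])$-action and $\U$ is the additive closure of a $(\nu\circ[1])$-orbit, so the induced module in $\stmod T(kQ)$ is a genuine module.

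The main obstacle I expect is the maximality/completeness check rather than the self-orthogonality: verifying $\Hom(\U,\U[i])=0$ is a finite list of non-existence-of-path statements, but proving that $\U$ is \emph{not} properly contained in a larger $4$-cluster-tilting subcategory requires ruling out every other $\tau$-orbit representative, and it is here that the precise geometry of $\Z D_4$ (the three ``rim'' orbits $\mathcal{O}_1,\mathcal{O}_3,\mathcal{O}_4$ versus the central orbit) must be used carefully, exactly as in the necessity argument of Lemma~\ref{ontherim}. In practice this is a bounded computation best organized by tabulating, for each indecomposable $Z$ in one fundamental domain, the three spaces $\Ext^1(Z,\U),\Ext^2(Z,\U),\Ext^3(Z,\U)$ and confirming that at least one is nonzero precisely when $Z\notin\U$, which is what the author means by calling the verification straightforward.
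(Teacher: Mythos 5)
Your proposal is correct and takes essentially the same route as the paper: the paper likewise reduces via Happel's equivalence to checking that $\add\{\tau^{5l}(P_1\oplus P_4)\mid l\in\Z\}\subset\dm{kQ}$ is a $(\nu\circ[1])$-equivariant $4$-cluster-tilting subcategory, using $\nu\circ[1]\simeq\tau^{-5}$ and $[1]\simeq\tau^{-3}$, and treats the remaining mesh-category checks as a straightforward finite verification on $\Z D_4$. Two small corrections: the $P_3$-versus-$P_4$ discrepancy is resolved not by ``matching the labelling of two quivers'' (both statements use the same quiver of Figure~\ref{D4}) but by the quiver automorphism swapping the sinks $3$ and $4$, under whose induced auto-equivalence $\add\{\tau^{5l}(P_1\oplus P_3)\mid l\in\Z\}$ is the image of the paper's $\U$ and hence $4$-cluster-tilting if and only if $\U$ is; and a fundamental domain for $\tau^{-5}$ on $\Z D_4$ contains $4\times 5=20$ indecomposables, not twelve (twelve is the number of indecomposable $kQ$-modules, i.e.\ of positive roots of $D_4$).
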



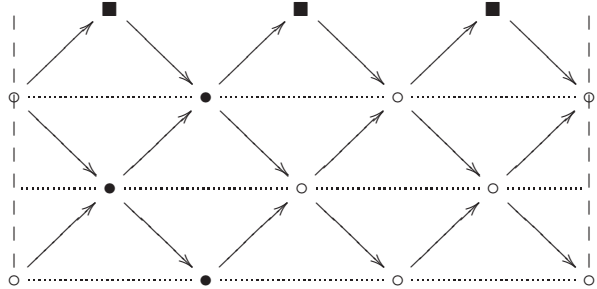
\begin{figure}[h]
  \[
  \xymatrix{
    \ar@{--}[ddd]& {}_{\blacksquare}\ar[dr] && {}_\blacksquare\ar[dr] && {}_\blacksquare\ar[dr] & \ar@{--}[ddd]\\
    \circ\ar[dr]\ar[ur]\ar@{.}[rr] && \bullet\ar[dr]\ar[ur]\ar@{.}[rr] &&
    \circ\ar[dr]\ar[ur]\ar@{.}[rr] &&\circ \\
    \ar@{.}[r]& \bullet\ar[ur]\ar[dr]\ar@{.}[rr] && \circ\ar[ur]\ar[dr]\ar@{.}[rr] &&
    \circ\ar[ur]\ar[dr]\ar@{.}[r] & \\
    \circ\ar[ur]\ar@{.}[rr] && \bullet\ar[ur]\ar@{.}[rr] && \circ\ar[ur]\ar@{.}[rr] && \circ \\
  }
  \]
  \caption{A $2$-cluster-tilting module of $T(kQ_3)$.}
\end{figure}


\begin{figure}[h]
  \[
  \xymatrix@R=.6cm@C=.6cm{
    \ar@{--}[dddddd]& {}_\blacksquare\ar[dr] && {}_\blacksquare\ar[dr] && {}_\blacksquare\ar[dr] && {}_\blacksquare\ar[dr] && {}_\blacksquare\ar[dr] &&  {}_\blacksquare\ar[dr] & \ar@{--}[dddddd] \\
    \circ\ar[ur]\ar[dr]\ar@{.}[rr] && \circ\ar[ur]\ar[dr]\ar@{.}[rr] &&
    \bullet\ar[ur]\ar[dr]\ar@{.}[rr] && \circ\ar[ur]\ar[dr]\ar@{.}[rr] && \circ\ar[ur]\ar[dr]\ar@{.}[rr] && \bullet\ar[dr]\ar[ur]\ar@{.}[rr] &&\circ \\
    \ar@{.}[r]& \circ\ar[ur]\ar[dr]\ar@{.}[rr] && \circ\ar[ur]\ar[dr]\ar@{.}[rr] && \bullet\ar[ur]\ar[dr]\ar@{.}[rr] && \circ\ar[ur]\ar[dr]\ar@{.}[rr] && \circ\ar[ur]\ar[dr]\ar@{.}[rr] && \bullet\ar[dr]\ar[ur]\ar@{.}[r] &\\
    \circ\ar[dr]\ar[ur]\ar@{.}[rr] && \circ\ar[dr]\ar[ur]\ar@{.}[rr] && \circ\ar[dr]\ar[ur]\ar@{.}[rr] && \circ\ar[dr]\ar[ur]\ar@{.}[rr] && \circ\ar[dr]\ar[ur]\ar@{.}[rr] && \circ\ar[dr]\ar[ur]\ar@{.}[rr] &&\circ \\
    \ar@{.}[r]& \circ\ar[ur]\ar[dr]\ar@{.}[rr] && \circ\ar[ur]\ar[dr]\ar@{.}[rr] &&
    \circ\ar[ur]\ar[dr]\ar@{.}[rr] && \circ\ar[ur]\ar[dr]\ar@{.}[rr] && \circ\ar[ur]\ar[dr]\ar@{.}[rr] && \circ\ar[dr]\ar[ur]\ar@{.}[r] &\\
    \circ\ar[dr]\ar[ur]\ar@{.}[rr] && \bullet\ar[dr]\ar[ur]\ar@{.}[rr] && \circ\ar[dr]\ar[ur]\ar@{.}[rr] && \circ\ar[dr]\ar[ur]\ar@{.}[rr] && \bullet\ar[dr]\ar[ur]\ar@{.}[rr] && \circ\ar[dr]\ar[ur]\ar@{.}[rr] &&\circ \\
      \ar@{.}[r]& \bullet\ar[ur]\ar@{.}[rr] && \circ\ar[ur]\ar@{.}[rr] &&
    \circ\ar[ur]\ar@{.}[rr] && \bullet\ar[ur]\ar@{.}[rr] && \circ\ar[ur]\ar@{.}[rr] && \circ\ar[ur]\ar@{.}[r] &\\
}
  \]
  \caption{A $2$-cluster-tilting module of $T(kQ_6)$.}
\end{figure}


\begin{figure}[h]
 \[
 \xymatrix@R=.4cm{ 
\ar@{--}[dddddd]&&&&&&& {}_\blacksquare\ar[dr]&&& \ar@{--}[dddddd]   \\
\circ\ar[ddr]\ar@{.}[rr] & & \bullet\ar[ddr]\ar@{.}[rr] & & \circ\ar[ddr]\ar@{.}[rr] & & \circ\ar[ddr]\ar@/^.2pc/@{.}[rr]\ar[ur] & {}_\blacksquare\ar[dr] & \circ\ar[ddr]\ar@{.}[rr] & & \circ \\
    \circ\ar[dr]\ar@{.}[rr] & & \circ\ar[dr]\ar@{.}[rr] & & \circ\ar[dr]\ar@{.}[rr] & & \circ\ar[dr]\ar@{.}[rr]\ar[ur] & & \circ\ar[dr]\ar@{.}[rr] & & \circ \\
\ar@{.}[r]&\circ\ar[ddr]\ar@{.}[rr]\ar[uur]\ar[ur] & & \circ\ar[ddr]\ar@{.}[rr]\ar[uur]\ar[ur] & & \circ\ar[ddr]\ar@{.}[rr]\ar[uur]\ar[ur] & & \circ\ar[ddr]\ar@{.}[rr]\ar[uur]\ar[ur] & & \circ\ar[ddr]\ar@{.}[r]\ar[uur]\ar[ur] &  \\
    \\
    \circ\ar[uur]\ar@{.}[rr]\ar[dr] & & \circ\ar[uur]\ar@{.}[rr]\ar[dr] & & \bullet\ar[uur]\ar@{.}[rr] & & \circ\ar[uur]\ar@{.}[rr] & & \circ\ar[uur]\ar@{.}[rr] & & \circ\\
& {}_\blacksquare\ar[ur] && {}_\blacksquare\ar[ur]&&&&&&&
  }
  \]
  \caption{A $4$-cluster-tilting module of $T(kQ)$, with $Q$ of type $D_4$.}
\end{figure}
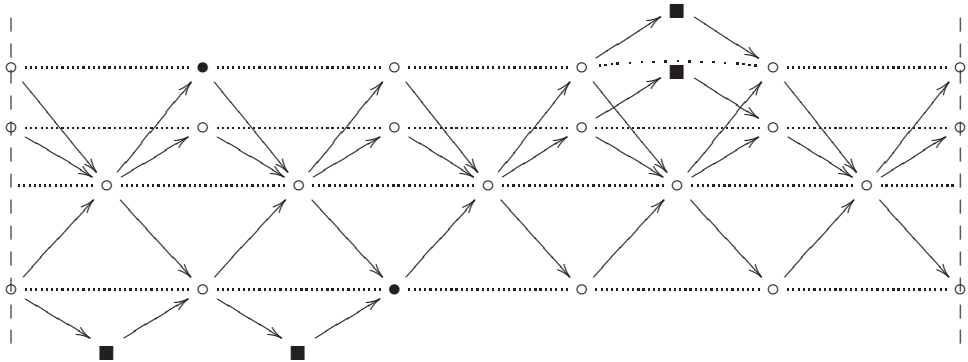

\section*{Acknowledgements}
The first named author was partially suppored by JSPS Grant-in-Aid for Scientific Research (C) 18K03238.
Both authors are grateful to the anonymous referee for helpful comments.

\bibliographystyle{abbrv}
\bibliography{../replitt,../mina}

\end{document}